\documentclass[12pt, reqno]{amsart}
\usepackage{amsmath, amsthm, amscd, amsfonts, amssymb, graphicx, color}
\usepackage[bookmarksnumbered, colorlinks, plainpages]{hyperref}
\usepackage{lscape}
\hypersetup{colorlinks=true,linkcolor=red, anchorcolor=green, citecolor=cyan,
urlcolor=red, filecolor=magenta, pdftoolbar=true}

\textheight 22.5truecm \textwidth 14.5truecm
\setlength{\oddsidemargin}{0.35in}\setlength{\evensidemargin}{0.35in}

\setlength{\topmargin}{-.5cm}

\newcommand{\hide}[1]{} 

\newcommand{\tblue}[1]{\textcolor{blue}{#1}}

\catcode`\@=11
\def\omathop#1#2#3{\let\temp=#1\def\letter{#2} \ifcat#3_
\let\next\@@olim\else\let\next\@olim\fi\next#3}
\def\@olim{\letter\text{-}\!\temp}
\def\@@olim_#1{\mathchoice{
 \setbox0=\hbox{$\displaystyle\letter\text{-}\!\temp\!\text{-}\letter$}
 \setbox2=\hbox{$\displaystyle\temp$}
 \setbox4=\hbox{$\scriptstyle#1$}
 \dimen@=\wd4 \advance\dimen@ by -\wd2 \divide\dimen@ by2
 \def\next{\letter\text{-}\!\temp_{\hbox to 0pt{\hss$\scriptstyle#1$\hss}}
 \hskip\dimen@}
 \ifdim\wd2&gt;\wd4 \def\next{\@olim_{#1}}\fi
 \ifdim\wd4&gt;\wd0 \def\next{\mathop{\llap{$\letter$-}\!\temp}\limits_{#1}}\fi
 \next}
 {\@olim_{#1}}{\@olim_{#1}}{\@olim_{#1}}}

\def\olim{\omathop{\lim}{(o)}}

\newcommand{\nl}{\left\|} 
\newcommand{\nr}{\right\|} 
\newcommand{\bl}{\left|} 
\newcommand{\br}{\right|} 
\newcommand{\be}{\begin{equation}}
\newcommand{\ee}{\end{equation}}
\newcommand{\sideremark}[1]{\setlength{\marginparwidth}{2.7cm}}

\def\eps{\varepsilon}
\def\la{\lambda}
\def\phi{\varphi}

\newcommand{\bi}{\begin{itemize}}
\newcommand{\ei}{\end{itemize}}
\newcommand{\bn}{\begin{enumerate}}
\newcommand{\en}{\end{enumerate}}
\def\R{\mathbb{R}}
\def\N{\mathbb{N}}

\def\cL{\mathcal{L}}

\def\eins{\mathbf{1}}

\newtheorem{thm}{Theorem}[section]

\newtheorem{prop}[thm]{Proposition}

\newtheorem{lemma}[thm]{Lemma}

\newtheorem{example}[thm]{Example}
\theoremstyle{definition}
\newtheorem{definition}[thm]{Definition}

\numberwithin{equation}{section}


\begin{document}

\title[On $C$-compact operators]{On $C$-compact orthogonally additive operators}

\author{Marat~Pliev\MakeLowercase{ 
and} Martin~R.~Weber}

\address{Southern Mathematical Institute of the Russian Academy of Sciences\\
Vladikavkaz, 362027 Russia}

\email{maratpliev@gmail.com}


\address{Technical University Dresden \\
Department of Mathematics, Institute of Analysis \\
01062 Dresden, Germany}

\email{martin.weber@tu-dresden.de}

\keywords{ $C$-compact operator, orthogonally additive operator,
narrow operator, Urysohn operator, $C$-complete vector lattice,
Banach lattice.}

\subjclass[2010]{Primary 47H30; Secondary 47H99.}

\begin{abstract}
We consider $C$-compact orthogonally additive operators in vector
lattices. After providing some examples of $C$-compact orthogonally additive operators 
on a vector lattice with values in a Banach space we show that the set of those 
operators is a projection band
in the Dedekind complete vector lattice of all regular orthogonally additive operators.
In the second part of the article we introduce a
new class of vector lattices, called $C$-complete, and 
show that any laterally-to-norm continuous $C$-compact
orthogonally additive operator from a $C$-complete vector lattice
to a Banach space is narrow, which generalizes a result of
Pliev and Popov. 
 \end{abstract}

\maketitle

\section{Introduction}

Orthogonally  additive operators in vector lattices first were
investigated in \cite{Maz-1}. Later these results were extended in
\cite{AP,AP-1,F-1,F-2,F-3,PF,PW,PPW}). Recently,
some connections with problems of the convex geometry  were revealed
\cite{TrVi1,TrVi2}. Orthogonally additive operators in
lattice-normed spaces were studied in \cite{AP-2}. In this paper we
continue this line of research. We analyze the notion of $C$-compact
orthogonally additive operator. In the first part of the article we
show that set of all $C$-compact orthogonally additive operators
from a vector lattice $E$ to an order continuous
Banach lattice $F$ is a projection band in the vector lattice of all
regular orthogonally additive operators from $E$ to $F$ (Theorem
\ref{thm-02}). 
In the final part of the paper  we introduce a new class of
vector lattices which we call $C$-complete (the precise definition is
given in section~$4$) and prove that any laterally-to-norm continuous
$C$-compact orthogonally additive operator from an atomless $C$-complete vector
lattice $E$ to a Banach space $X$ is narrow (Theorem \ref{thm-2}).
This is a generalization of the result of the article \cite[Theorem~3.2]{PP}. 

Note that linear narrow operators in function spaces first appeared in \cite{PPlich}. 
Nowadays the theory of narrow operators is a well-studied object of functional analysis and is presented 
in many research articles (\cite{FFKP,MMP,MP,M,PP,PPW}) and in the monograph \cite{PRan}.

\section{Preliminaries}
All necessary information on vector lattices one can find in \cite{AB}. 
In this article all vector lattices are assumed to be Archimedean.

Two elements $x,y$ of a vector lattice $E$ are called 
\textit{disjoint} (written as $x\perp y$), if $|x|\wedge|y|=0$. 
An element $a>0$ of $E$ is an \textit{atom} if $0\leq x\leq a, \;0\leq
y\leq a$ and $x\bot y$ imply that either $x=0$ or $y=0$. 
The equality $x=\bigsqcup\limits_{i=1}^{n}x_{i}$ means that
$x=\sum\limits_{i=1}^{n}x_{i}$ and  $x_{i}\bot x_{j}$ for all $i\neq
j$. In the case of $n=2$ we write $x = x_1\sqcup x_2$.
An element $y$ of a  vector lattice $E$ is called a \textit{fragment} 
 (or a \textit{component}) of $x \in E$, if
$y\perp(x-y)$. The notation $y \sqsubseteq x$ means that $y$ is a
fragment of $x$.  Two fragments $x_{1}$ and $x_{2}$ of an element
$x$ are said to be \textit{mutually complemented}  if $x = x_1
\sqcup x_2$. The set of all fragments of an element  $x\in E$ is
denoted by $\mathcal{F}_{x}$.

\begin{definition} \label{def:ddmjf0}
Let be $E$ a vector lattice and $X$ a real vector space. An
operator $T:E\rightarrow X$ is said to be \textit{orthogonally additive}
if $T(x+y)=Tx+Ty$ for any  disjoint elements $x,y\in E$.
\end{definition}

It is not hard to check that $T(0)=0$. The set of all orthogonally additive
operators from $E$ to $X$ is a real vector space with respect to the natural
linear operations.

\begin{definition}
Let $E$ and $F$ be vector lattices. An orthogonally additive (in general,
nonlinear) operator $T:E\rightarrow F$ is said to be:
\begin{itemize}
  \item \textit{positive} if $Tx \geq 0$ for all $x \in E$,
  \item \textit{regular},  if  $T=S_{1}-S_{2}$ for two positive, orthogonally
additive operators $S_{1}$ and $S_{2}$ from $E$ to $F$.
\end{itemize}
\end{definition}
The sets of all positive, regular 
orthogonally additive operators from $E$ to $F$ are denoted by
$\mathcal{OA}_{+}(E,F)$, $\mathcal{OA}_{r}(E,F)$, 
respectively, where 
the order in $\mathcal{OA}_r(E,F)$ is introduced as follows:  $S\leq T$ whenever
$(T-S)\geq 0$.
Then $\mathcal{OA}_r(E,F)$ becomes an ordered vector space. 

For a Dedekind complete
vector lattice $F$ we have the following  property of $\mathcal{OA}_{r}(E,F)$.
\begin{prop}[{\cite[Theorem~3.6]{PR}}] \label{thm:PK}
Let $E$ and $F$ be a vector lattices, and assume that $F$ is Dedekind
complete.Then
$\mathcal{OA}_{r}(E,F)$ is a Dedekind complete vector lattice. 
Moreover, for every $S$, $T\in
\mathcal{OA}_{r}(E,F)$ and every $x\in E$ the following formulas\footnote{\, In
the literature these formulas are known as
the Riesz-Kantorovich formulas.}
hold
\begin{enumerate}
\item~$(T\vee S)(x)=\sup\{Ty+Sz:\,x=y\sqcup z\}$;
\item~$(T\wedge S)(x)=\inf\{Ty+Sz:\,x=y\sqcup z\}$;
\item~$T^{+}(x)=\sup\{Ty:\,y\sqsubseteq x\}$;
\item~$T^{-}(x)=-\inf\{Ty:\,\,\,y\sqsubseteq x\}$;
\item~$|Tx|\leq|T|(x)$.
\end{enumerate}
\end{prop}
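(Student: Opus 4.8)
The plan is to transcribe the classical Riesz--Kantorovich construction for regular linear operators, replacing the order interval $[0,x]$ by the fragment set $\mathcal{F}_{x}$ throughout. Two elementary observations carry the argument. \emph{First}, a positive orthogonally additive operator $S$ is monotone along fragments: if $y\sqsubseteq z$ then $z=y\sqcup(z-y)$, hence $Sz=Sy+S(z-y)\geq Sy$; and this holds regardless of the signs of $y$ and $z$, since here ``positive'' means $Sw\geq 0$ for every $w\in E$. \emph{Second}, for disjoint $u,v$ one has $\mathcal{F}_{u\sqcup v}=\{a\sqcup b:\ a\in\mathcal{F}_{u},\ b\in\mathcal{F}_{v}\}$; the inclusion ``$\supseteq$'' is a direct disjointness check, while ``$\subseteq$'' follows by writing $|y|=w_{1}+w_{2}$ with $0\leq w_{1}\leq|u|$ and $0\leq w_{2}\leq|v|$ via the Riesz decomposition property, then using $|y|\wedge(|u|+|v|-|y|)=0$ together with the cross-disjointness of the carriers of $u$ and $v$ to see that each $w_{i}$ is itself a fragment, and finally lifting the splitting from $|y|$ back to $y$.

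Now fix $T=S_{1}-S_{2}\in\mathcal{OA}_{r}(E,F)$ and $x\in E$. Splitting any $y\sqsubseteq x$ according to $x=x^{+}\sqcup(-x^{-})$ and applying monotonicity of $S_{1}$ along fragments gives $Ty\leq S_{1}x^{+}+S_{1}(-x^{-})$, so $\{Ty:\ y\sqsubseteq x\}$ is bounded above; as $F$ is Dedekind complete, $T^{+}(x):=\sup\{Ty:\ y\sqsubseteq x\}$ exists. Choosing $y=0$ and $y=x$ shows $T^{+}\geq 0$ and $T^{+}\geq T$ in the operator order. Orthogonal additivity of $T^{+}$ follows from the second observation: if $x=u\sqcup v$ then $T^{+}(x)=\sup\{Ta+Tb:\ a\in\mathcal{F}_{u},\ b\in\mathcal{F}_{v}\}=T^{+}(u)+T^{+}(v)$, the last equality being the identity ``the supremum of a sum over two independent index sets is the sum of the suprema'', valid in any Dedekind complete vector lattice; thus $T^{+}\in\mathcal{OA}_{+}(E,F)$. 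It is the least upper bound of $\{T,0\}$ in $\mathcal{OA}_{r}(E,F)$: if $R\in\mathcal{OA}_{r}(E,F)$ with $R\geq T$ and $R\geq 0$, then for $y\sqsubseteq x$ we have $R(x)=R(y)+R(x-y)\geq R(y)\geq T(y)$, whence $R(x)\geq T^{+}(x)$. So every $T$ has a positive part $T^{+}=T\vee 0$, which makes $\mathcal{OA}_{r}(E,F)$ a vector lattice; formula (3) holds by construction, and formula (4) is formula (3) applied to $-T$.

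Formulas (1), (2) and (5) then drop out of the usual Riesz-space identities. For (1), $(T\vee S)(x)=((T-S)^{+}+S)(x)=\sup\{(T-S)y:\ y\sqsubseteq x\}+Sx=\sup\{Ty+Sz:\ x=y\sqcup z\}$, after substituting $z=x-y$ and $Sx=Sy+Sz$; (2) follows in the same way from $T\wedge S=-((-T)\vee(-S))$. For (5), both $|T|-T=(T^{+}-T)+T^{-}$ and $|T|+T=T^{+}+((-T)^{+}-(-T))$ are positive operators, so $-|T|\leq T\leq|T|$ in the operator order, and evaluating at $x$ gives $|Tx|\leq|T|(x)$.

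Finally, for Dedekind completeness it suffices to exhibit $\sup\mathcal{D}$ for an upward directed, order bounded family $\mathcal{D}\subseteq\mathcal{OA}_{r}(E,F)$ (an arbitrary order bounded set reduces to this by passing to finite suprema, which exist by the previous step). For each $x\in E$ the set $\{D(x):\ D\in\mathcal{D}\}$ is directed and order bounded in $F$, so $S(x):=\sup_{D\in\mathcal{D}}D(x)$ exists; $S$ is orthogonally additive by the ``supremum of a sum'' identity for directed families (using that $D\leq D'$ forces $D(w)\leq D'(w)$ for \emph{all} $w\in E$), and it is regular because $S=(S-D_{0})+D_{0}$ with $S-D_{0}\geq 0$ for any fixed $D_{0}\in\mathcal{D}$; one then checks at once that $S=\sup\mathcal{D}$ in $\mathcal{OA}_{r}(E,F)$. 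I expect the main obstacle to be the second preliminary observation, the precise description of $\mathcal{F}_{u\sqcup v}$: since $E$ is merely Archimedean there is no band projection available with which to cut $y$, so the required splitting has to be extracted by hand from the Riesz decomposition property and careful disjointness bookkeeping. Everything else is a faithful copy of the linear Riesz--Kantorovich theory, with ``$\sup$ of a sum equals sum of $\sup$s'' doing the recurring work.
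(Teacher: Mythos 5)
The paper does not prove this proposition at all: it is imported verbatim by citation from \cite{PR} (Theorem~3.6 there), so there is no in-text argument to compare yours against. Your proof is the standard fragment-adapted Riesz--Kantorovich construction, which is exactly how the cited source proceeds, and it is essentially correct: the monotonicity of positive orthogonally additive operators along fragments, the pointwise definition $T^{+}(x)=\sup\{Ty:\,y\sqsubseteq x\}$ with the upper bound $S_{1}x^{+}+S_{1}(-x^{-})$, the derivation of (1), (2), (5) from (3), and the pointwise supremum of a directed bounded family for Dedekind completeness all go through as you describe. The one step that genuinely needs care is your second preliminary observation, $\mathcal{F}_{u\sqcup v}=\{a\sqcup b:\,a\in\mathcal{F}_{u},\,b\in\mathcal{F}_{v}\}$; your sketch via the Riesz decomposition property is a little loose (the clean route is to reduce to positive elements via Proposition~\ref{OPR} and use $|y|=|y|\wedge|u|+|y|\wedge|v|$ for disjoint $|u|,|v|$, then verify fragment-hood from $u-|y|\wedge u\leq x-y$). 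Note that this observation is precisely the content of the paper's own Proposition~\ref{Riesz} (the common-refinement lemma), proved there by exactly this reduction, so your argument is consistent with the machinery the paper builds anyway.
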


\section{The projection band of $C$-compact orthogonally additive operators}

In this section we 
show that the set of all
$C$-compact regular orthogonally additive operators from a vector
lattice $E$ to a Banach lattice $F$ with order continuous norm is a
band in the vector lattice of all orthogonally additive regular
operators from $E$ to $F$.

Consider some examples.
\begin{example}\label{Ur}
Assume that $(A,\Xi,\mu)$ and $(B,\Sigma,\nu)$ are $\sigma$-finite  measure spaces. %
We say that a map  $K:A\times
B\times\mathbb{R}\rightarrow\mathbb{R}$
is a {\it Carath\'{e}odory function} if there hold the conditions:
\begin{enumerate}
  \item $K(\cdot,\cdot,r)$ is $\mu\times\nu$-measurable for all $r\in\mathbb{R}$;
  \item $K(s,t,\cdot)$ is continuous on $\mathbb{R}$ for $\mu\times\nu$-almost all $(s,t)\in A\times B$.
\end{enumerate}
If additionally there holds $K(s,t,\cdot)=0$ for $\mu\times\nu$-almost all $(s,t)\in A\times B$ 
then with the function $K$ a nonlinear integral operator: 
$T:\text{Dom}(K)\rightarrow L_{0}(\mu)$  
$$
(Tf)(s):=\int_{B}K(s,t,f(t))\,d\nu(t)\quad\mbox{for}  \: \mu\mbox{-almost \; all} \; s\in A 
$$
is associated, where 
$$
\text{Dom}(K):=\{f\in L_{0}(\nu):\,\int_{B}|K(s,t,f(t))|\,d\nu(t)\in
L_{0}(\mu)\}.
$$

 Assume that $E$ is a Banach function space 
in $L_{0}(\nu)$ and $E\subseteq \text{Dom}(K)$.
Then $T$ defines an \textit{orthogonally additive
 integral operator} acting from $E$ to $L_0(\mu)$.
 The operator $T$ is called 
{\it Urysohn (integral) operator and, the function $K$ is called the
{\it kernel} of this  operator}. If $L_{0}(\mu)$ is replaced by $\Bbb{R}$
we say that $T$ is an \textit{Urysohn integral functional.}
\end{example}

\begin{example}
Assume that $(\Omega,\Sigma,\mu)$ is a measure space.
Then the functional  $\mathcal{N}:L_{1}(\mu)\to \R$ defined by
$$
\mathcal{N}(f)=\| f\|_{L_{1}(\mu)}:=\int_{\Omega}|f(t)|\,d\mu,\quad f\in
L_{1}(\mu)
$$
is positive and orthogonally additive.
\end{example}

Let $E$ be a vector lattice. 
A linear operator $S:E\to E$ is called {\it band preserving} if
$Sx\in\{x\}^{\perp\perp}$ for all $x\in E$. 
Clearly, every band preserving operators preserves disjointness.

\begin{example}
Let $(\Omega,\Sigma,\mu)$ be a measure space and $S:L_{0}(\mu)\to
L_{0}(\mu)$ be a band preserving linear operator. Consider the new
operator $T:L_{0}(\mu)\to L_{0}(\mu)$ defined by
\begin{align*}
Tf=fS(f),\quad f\in L_{0}(\mu).
\end{align*}
Observe that $T$ can be treated as the multiplication operator by the "variable" function. 
It is not hard to verify that  $T$ is an
orthogonally additive operator. 
Indeed, due to 
$Sf\in\{f\}^{\perp\perp}$, for every disjoint $f_{1},f_{2}\in L_{0}(\mu)$ we
have
\begin{align*}
T(f_1+f_2)=(f_1+f_2)S(f_1+f_2)= \\ 
f_1S(f_1+f_2)+f_2S(f_1+f_2)= \\
f_1Sf_1+f_2Sf_2=Tf_1+Tf_2.
\end{align*}
\end{example}
\begin{definition}\label{Nem}
Let  $(\Omega,\Sigma,\mu)$ be a $\sigma$-finite  measure space. We say
that a function $N:\Omega\times \R\to \R$ belongs  to the class
$\mathfrak{S}$ (or $N$ is a $\mathfrak{S}$-function) if
the  following conditions hold:
\begin{enumerate}
  \item \; $N(t,0)=0$ for $\mu$-almost all $t\in \Omega$;
  \item \; $N(\cdot,g(\cdot))$ is $\mu$-measurable for any $g\in
L_{0}(\mu)$.
\end{enumerate}
If a function $N$  satisfies only the condition $(2)$, then it is called a
\textit{ superpositionally measurable function} or
\textit{sup-measurable function}.
\end{definition}

\begin{example}
Let $N:\Omega\times \R\to \R$ be a $\mathfrak{S}$-function. Then with $N$
there is associated an orthogonally additive operator
$T_{N}:L_{0}(\mu)\to L_{0}(\mu)$ defined  by
$$
T_{N}(g)(t)=N(t,g(t)) \;\,\mbox{for $\mu$-almost all } \, t\in \Omega \;\,
\mbox{and } \;g\in L_{0}(\mu).
$$
\end{example}
\hide{
For convenience of the reader we present a short proof.
\begin{proof}
For $f\in L_{0}(\mu)$ we show that
\[  N(t,f\eins_D(t))=N(t,f(t))\eins_D(t) \quad \mbox{for all} \; D\in \Sigma,    \]
where as usual, $\eins_D$ denotes the characteristic function of the set $D$.
Indeed,  take $t\in D$. Then
\[  N(t, f\eins_D(t)) = N(t, f(t)) = N(t,f(t))\eins_D(t).   \]
Assume that  $t\notin D$. Then by condition $(C_1)$ of the
Definition~\ref{Nem} we have
\[  N(t, f\eins_D(t)) = N(t, 0)= 0 = N(t,f(t))\eins_D(t).   \]
We recall that the {\it support} of $f\in L_{0}(\mu)$
is the measurable set $\text{supp}\,f:=\{t\in A:\,f(t)\neq 0\}$.
Take $f,g\in L_{0}(\mu)$ with $f\perp g$. Then there exist
measurable sets $D_1$ and $D_2$, such that $D_1\cap D_2=\emptyset$,
$D_1=\text{supp}(f)$ and $D_2=\text{supp}(g)$.
  Thus for almost all $t\in A$ we have
\begin{gather*}
T_N(f+g)(t)=N(t,(f+g)(t))\eins_{D_1\sqcup D_2}(t)= \\
N(t,(f+g)(t))\eins_{D_1}(t)+N(t,(f+g)(t))\eins_{D_2}(t) = \\
N(t,f\eins_{D_1}(t))+N(t,g\eins_{D_2}(t))=\\
N(t,f(t))+N(t,g(t))=T_N(f)(t)+T_N(g)(t)
\end{gather*}
and the  orthogonal additivity of  $T_N$ is proved.
\end{proof}
 }

This class of operators (sometimes called {\it nonlinear superposition
operators} or {\it Nemytskii operators}) is widely represented in the literature 
(see e.g. \cite{AZ}).

\begin{definition}
Let be $E$ be a vector lattice and $Y$  a normed space.
An orthogonally additive operator $T:E\to Y$ is called 
\begin{itemize}
\item \textit{$AM$-compact} provided $T$ maps 
  order bounded subsets of $E$ into relatively compact subsets in  $Y$,
\item \textit{$C$-compact}, if  $T(\mathcal{F}_{x})$ is
a relatively compact in  $Y$ for any $x\in E$.
\end{itemize}
For a Banach lattice $F$ by $\mathcal{COA}_{r}(E,F)$ is denoted the space of
all
$C$-compact regular orthogonally additive operators from
$E$ to $F$.
\end{definition}

\begin{example}
We note that 
$\mathcal{OA}_r(\R,\R)$ is exactly the set of all real-valued functions such that $f(0)=0$.
Define an orthogonally additive operator $T:\R\to \R$  by
$$
Tx=\begin{cases} \frac{1}{x^2},\;\;\text{if $x\neq
0$}\\
\; 0,\;\;\;\text{if $x=0$.}\\
\end{cases}
$$
Since any element $x\in\R$ is an atom  one has $\mathcal{F}_x=\{0,x\}$ for any
$x\in\R$. It follows that $T$ is a $C$-compact operator.
On the other hand  $T([0,1])$ is an unbounded set in $\R$ and therefore
$T$ is not $AM$-compact.
\end{example}

\begin{prop}
Let $(B,\Sigma,\nu)$ be $\sigma$-finite  measure spaces $E$ be a Banach function space 
in $L_{0}(\nu)$ and $T\colon E\to\Bbb{R}$ be the Urysohn integral functional defined by
$$
Tf=\int_{B}K(t,f(t))\,d\nu(t),\quad f\in E
$$
with the kernel $K$. Then $T$ is $C$-compact.
\end{prop}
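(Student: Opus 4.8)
The plan is to reduce $C$-compactness to plain boundedness, since a subset of $\mathbb{R}$ is relatively compact if and only if it is bounded; thus, fixing $x \in E$, I only need a uniform estimate on the set $T(\mathcal{F}_x)$.

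The first step is to identify $\mathcal{F}_x$. Since $E$ is a Banach function space in $L_{0}(\nu)$, it is an order ideal, and one checks (working modulo $\nu$-null sets) that $y \perp (x-y)$ in $L_{0}(\nu)$ holds exactly when $y(t) \in \{0, x(t)\}$ for $\nu$-a.e.\ $t$; hence $\mathcal{F}_x = \{\, x\mathbf{1}_D : D \in \Sigma \,\}$, and each $x\mathbf{1}_D$ indeed belongs to $E$ because $|x\mathbf{1}_D| \leq |x|$. The second step is the pointwise identity
\[
K\bigl(t, x(t)\mathbf{1}_D(t)\bigr) = K\bigl(t, x(t)\bigr)\mathbf{1}_D(t) \qquad \text{for } \nu\text{-a.e. } t,
\]
which follows at once from the normalization $K(t,0) = 0$ built into the definition of the kernel of a Urysohn operator: on $D$ both sides equal $K(t,x(t))$, and off $D$ both vanish. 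Integrating over $B$ gives $T(x\mathbf{1}_D) = \int_D K(t, x(t))\, d\nu(t)$ for every $D \in \Sigma$.

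For the final step, I would invoke $x \in E \subseteq \mathrm{Dom}(K)$ together with the fact that $T$ is a Urysohn \emph{functional}, so that $M := \int_B |K(t, x(t))|\, d\nu(t) < \infty$. Then $|T(x\mathbf{1}_D)| \leq \int_D |K(t, x(t))|\, d\nu(t) \leq M$ for every $D \in \Sigma$, whence $T(\mathcal{F}_x) \subseteq [-M, M]$ is bounded, hence relatively compact in $\mathbb{R}$, as required. The proof is short; the only points needing a little care are the measure-theoretic identification of $\mathcal{F}_x$ up to null sets and the justification of the pointwise identity for $K$, while the actual content is merely the observation that membership in $\mathrm{Dom}(K)$ already furnishes the needed uniform integrable bound. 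I do not expect any serious obstacle here.
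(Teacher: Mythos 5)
Your proof is correct and follows essentially the same route as the paper: identify $\mathcal{F}_x$ with $\{x\mathbf{1}_D : D\in\Sigma\}$, use $K(t,0)=0$ to get $T(x\mathbf{1}_D)=\int_D K(t,x(t))\,d\nu(t)$, and bound everything by $M=\int_B|K(t,x(t))|\,d\nu(t)$, which is finite since $x\in\mathrm{Dom}(K)$. You are in fact slightly more careful than the paper in spelling out the pointwise identity and in writing the two-sided bound $|T(x\mathbf{1}_D)|\leq M$ rather than only an upper estimate.
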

\begin{proof}
Take $f\in E$. 
We note that $\mathcal{F}_{f}$
coincides with the set $\{f\eins_D:\,D\in\Sigma\}$. Now for every
$D\in\Sigma$ we write 
\begin{align*}
Tf\eins_D=\int_{B}K(t,f\eins_{D}(t))\,d\nu(t)=\int_{D}K(t,f(t))\,d\nu(t)\leq\\
\int_{B}|K(t,f(t))|\,d\nu(t)=M.
\end{align*}

Hence the set $T(\mathcal{F}_{f})$ is order bounded in $\Bbb{R}$ and
therefore the operator $T$ is $C$-compact.
\end{proof}

We mention  that a $C$-compact order bounded orthogonally additive  operator 
$T:E \to F$ from a Banach
lattices $E$ to a $\sigma$-Dedekind complete Banach lattice $F$ is $AM$-compact if, in
addition, $T$ is uniformly continuous
on order bounded subsets of $E$ \cite[Theorem~3.4]{Maz-2}.

The norm in a normed vector lattice is {\textit order continuous} if
$x_\alpha \downarrow 0$ implies $\|x_\alpha\|\downarrow 0$. We point
out that a Banach lattice with order continuous norm is Dedekind
complete (see \cite[Theorem 12.9]{AB}). 

Now we are ready to present the first main result of the article. 
\begin{thm} \label{thm-02}
Let be $E$ a vector lattice and $F$ a Banach lattice with order
continuous norm. Then
 the set  of all $C$-compact regular orthogonally additive operators from $E$
to
$F$ is a projection band in $\mathcal{OA}_{r}(E,F)$.
\end{thm}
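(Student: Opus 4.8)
The plan is to use that $F$, having order continuous norm, is Dedekind complete, so by Proposition~\ref{thm:PK} the space $\mathcal{OA}_{r}(E,F)$ is a Dedekind complete vector lattice in which the Riesz--Kantorovich formulas hold. Since in a Dedekind complete vector lattice every band is automatically a projection band, it is enough to prove that $\mathcal{COA}_{r}(E,F)$ is a band in $\mathcal{OA}_{r}(E,F)$, that is, a solid, order closed linear subspace.

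That $\mathcal{COA}_{r}(E,F)$ is a linear subspace is immediate, since $(T_{1}+\lambda T_{2})(\mathcal{F}_{x})\subseteq T_{1}(\mathcal{F}_{x})+\lambda T_{2}(\mathcal{F}_{x})$ and finite algebraic combinations of relatively compact sets are relatively compact. I would also use repeatedly the elementary remark that, by formula~(5) of Proposition~\ref{thm:PK} together with the monotonicity of the positive operator $|T|$ along fragments, $|Ty|\le|T|(y)\le|T|(x)$ whenever $y\sqsubseteq x$; hence $T(\mathcal{F}_{x})$ is always an order bounded subset of $F$, lying in $[-|T|(x),|T|(x)]$.

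The heart of the proof is solidity, which I would split into two statements. \emph{(a) If $T\in\mathcal{COA}_{r}(E,F)$ then $T^{+}$ and $T^{-}$ lie in $\mathcal{COA}_{r}(E,F)$.} For $y\sqsubseteq x$, formula~(3) gives $T^{+}(y)=\sup\{Tz:\ z\sqsubseteq y\}$, a supremum of a subset of the relatively compact, order bounded set $\overline{T(\mathcal{F}_{x})}$; one analyses this supremum through its finite subsuprema, which converge to it in norm because the norm of $F$ is order continuous, and through the finite additivity of $z\mapsto Tz$ on the Boolean algebra $\mathcal{F}_{x}$, to conclude that $\{T^{+}(y):\ y\sqsubseteq x\}$ is relatively compact; the case of $T^{-}$ is symmetric. \emph{(b) If $0\le S\le T$ in $\mathcal{OA}_{r}(E,F)$ and $T\in\mathcal{COA}_{r}(E,F)$, then $S\in\mathcal{COA}_{r}(E,F)$.} This is a Dodds--Fremlin-type domination result and is the main obstacle. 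Fixing $x$ and $\varepsilon>0$, one takes a finite $\varepsilon$-net $Ty_{1},\dots,Ty_{n}$ for $T(\mathcal{F}_{x})$, decomposes each $y\sqsubseteq x$ and each $y_{i}$ along the Boolean algebra $\mathcal{F}_{x}$ so as to express $Sy-Sy_{i}$ through $S$ evaluated on ``difference fragments'', which are dominated by $T$ on the very same fragments, and combines this with the fact that relatively compact sets in an order continuous Banach lattice are almost order bounded, in order to extract from the net for $T(\mathcal{F}_{x})$ a finite net controlling $S(\mathcal{F}_{x})$. Granting (a) and (b), solidity follows: if $|S|\le|T|$ with $T\in\mathcal{COA}_{r}(E,F)$, then $|T|=T^{+}+T^{-}\in\mathcal{COA}_{r}(E,F)$ by (a), hence $0\le S^{+},S^{-}\le|S|\le|T|$ are $C$-compact by (b), and therefore so is $S=S^{+}-S^{-}$.

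Lastly I would handle order closedness. Since $\mathcal{COA}_{r}(E,F)$ is already solid, it suffices to check that $0\le T_{\alpha}\uparrow T$ in $\mathcal{OA}_{r}(E,F)$ with all $T_{\alpha}\in\mathcal{COA}_{r}(E,F)$ forces $T\in\mathcal{COA}_{r}(E,F)$. First, because $\sup(a_{\alpha}+b_{\alpha})=\sup a_{\alpha}+\sup b_{\alpha}$ for increasing nets, the pointwise supremum $z\mapsto\sup_{\alpha}T_{\alpha}(z)$ is orthogonally additive and is therefore the supremum $T$ in $\mathcal{OA}_{r}(E,F)$; thus $T_{\alpha}(z)\uparrow T(z)$ for every $z\in E$. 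Fix $x\in E$. For $y\sqsubseteq x$ we have $T-T_{\alpha}\ge0$, whence $0\le(T-T_{\alpha})(y)\le(T-T_{\alpha})(x)$ and $\|Ty-T_{\alpha}y\|\le\|Tx-T_{\alpha}x\|$; since $Tx-T_{\alpha}x\downarrow0$ and the norm of $F$ is order continuous, $\|Tx-T_{\alpha}x\|\to0$. Hence $\sup_{y\in\mathcal{F}_{x}}\|Ty-T_{\alpha}y\|\to0$, so the relatively compact sets $T_{\alpha}(\mathcal{F}_{x})$ converge uniformly to $T(\mathcal{F}_{x})$; consequently $T(\mathcal{F}_{x})$ is totally bounded, hence (as $F$ is complete) relatively compact, and $T$ is $C$-compact. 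Therefore $\mathcal{COA}_{r}(E,F)$ is a solid, order closed linear subspace of the Dedekind complete vector lattice $\mathcal{OA}_{r}(E,F)$, i.e.\ a band, hence a projection band. As indicated, the delicate point is the domination step~(b); it is there that the order continuity of $F$ must be exploited in an essential way beyond the soft arguments above.
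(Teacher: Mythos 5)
Your overall skeleton is the same as the paper's: reduce to showing $\mathcal{COA}_{r}(E,F)$ is a band in the Dedekind complete lattice $\mathcal{OA}_{r}(E,F)$ (projection band then comes for free), and your order-closedness argument via $0\le (T-T_{\alpha})(y)\le (T-T_{\alpha})(x)$ for $y\sqsubseteq x$ is exactly the paper's step and is fine. The genuine gap is in your solidity step (b), which you yourself flag as ``the main obstacle'' but then only sketch, and the sketch does not go through as described. Writing $y=a\sqcup c$, $y_i=b\sqcup c$ with $c$ the common fragment, you have $Sy-Sy_i=Sa-Sb$ and $Ty-Ty_i=Ta-Tb$ with $Ta,Tb\ge 0$; smallness of $\|Ta-Tb\|$ does \emph{not} force $\|Ta\|$ or $\|Tb\|$ to be small (they may be large and nearly cancel), so domination $0\le Sa\le Ta$ gives you no control of $\|Sa-Sb\|$. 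Likewise, the ``almost order bounded'' property you invoke only transfers from $T(\mathcal{F}_x)$ to $S(\mathcal{F}_x)$ a relatively \emph{weakly} compact-type conclusion: in an order continuous Banach lattice an almost order bounded set need not be norm relatively compact (think of $[0,\mathbf{1}]$ in $L_1$). So step (b) as planned is not a proof, and your step (a) has the same vagueness: $T^{+}(y)=\sup\{Tz:z\sqsubseteq y\}$ must be approximated by finite subsuprema \emph{uniformly in} $y\sqsubseteq x$, which requires choosing a single disjoint decomposition of $x$ that works for all fragments simultaneously.

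The paper's way around both difficulties is worth internalizing, because it sidesteps the Dodds--Fremlin-type analysis entirely. First it proves a fragment lemma: if $T$ is $C$-compact and $S$ is a fragment of $T$ in the operator lattice ($|S|\wedge|T-S|=0$), then $S$ is $C$-compact. The proof uses the directed-set formula $|T|x=\sup\bigl\{\sum_{i}|Tx_i|:x=\bigsqcup_{i}x_i\bigr\}$ together with order continuity of the norm to pick one decomposition $x=\bigsqcup_{i}x_i$ with $\bigl\|\sum_{i}|S|x_i\wedge|T-S|x_i\bigr\|$ small, and then the pointwise inequality $|x|\le|x+y|+|x|\wedge|y|$ to bound $|Sy_i-Su_i|$ by $|Ty_i-Tu_i|+2(|S|x_i\wedge|T-S|x_i)$. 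This gives your step (a) (as $T^{+}$ is a fragment of $T$), hence the sublattice property. Then, for domination $0\le R\le T$, instead of a direct $\varepsilon$-net transfer the paper applies the Freudenthal spectral theorem in $\mathcal{OA}_{r}(E,F)$: $R\in I_T$ is the increasing limit of $T$-step functions $\sum_i\lambda_iT_i$ with $T_i$ disjoint fragments of $T$; each such step function is $C$-compact by the fragment lemma, and the order-closedness step (your last paragraph) finishes. Without the fragment lemma and the Freudenthal reduction (or a genuinely worked-out substitute for (b)), your argument is incomplete at its central point.
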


In order to prove Theorem~\ref{thm-02} we  need some auxiliary propositions.
\begin{prop} \cite[Proposition~3.9]{OPR}\label{OPR}
Let $E$ be a vector lattice and $x,y\in E$. Then $x\sqsubseteq y$ if and only
if
$x^{+}\sqsubseteq y^{+}$ and $x^{-}\sqsubseteq y^{-}$.
\end{prop}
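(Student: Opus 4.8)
The plan is to prove the equivalence by using the disjointness characterization of fragments together with the fact that positive and negative parts of disjoint elements are themselves disjoint. Recall that $x \sqsubseteq y$ means $x \perp (y-x)$, i.e. $|x| \wedge |y-x| = 0$.

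First I would establish the easy direction: assume $x^{+} \sqsubseteq y^{+}$ and $x^{-} \sqsubseteq y^{-}$. The key observation is that $x^{+} \sqsubseteq y^{+}$ forces $x^{+}$ to lie in the band generated by $y^{+}$, hence $x^{+} \perp y^{-}$ (since $y^{+} \perp y^{-}$), and symmetrically $x^{-} \perp y^{+}$; together with $x^{+} \perp x^{-}$ this gives $x = x^{+} - x^{-}$ with the four pieces pairwise disjoint in the appropriate pattern. From $x^{+} \sqsubseteq y^{+}$ we get $x^{+} \perp (y^{+} - x^{+})$, and from $x^{-} \sqsubseteq y^{-}$ we get $x^{-} \perp (y^{-} - x^{-})$. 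Writing $y - x = (y^{+} - x^{+}) - (y^{-} - x^{-})$ and using that $y^{+} - x^{+} \ge 0$, $y^{-} - x^{-} \ge 0$ are disjoint (they sit under $y^{+}$ and $y^{-}$ respectively), one identifies $(y-x)^{+} = y^{+} - x^{+}$ and $(y-x)^{-} = y^{-} - x^{-}$. Then $|x| \wedge |y - x| = (x^{+} + x^{-}) \wedge \big((y^{+} - x^{+}) + (y^{-} - x^{-})\big)$, and by distributing the infimum over these disjoint decompositions and using the four disjointness relations, every cross term vanishes, so $x \perp (y-x)$, i.e. $x \sqsubseteq y$.

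For the converse, suppose $x \sqsubseteq y$, i.e. $|x| \wedge |y - x| = 0$. I would first show $x^{+} \le y^{+}$ and $x^{-} \le y^{-}$ (as a step toward identifying the parts of $y - x$): since $|x| \perp |y-x|$ and $0 \le x^{+} \le |x|$, $0 \le (y-x)^{-} \le |y-x|$, we get $x^{+} \perp (y-x)^{-}$, whence $x^{+} = x^{+} \wedge (x^{+} + (y-x)^{+}) $; combined with $y^{+} \le x^{+} + (y-x)^{+}$ and $y^{+} = y^{+} \wedge$ (its own supporting band data) this yields $x^{+} \le y^{+}$, and symmetrically $x^{-} \le y^{-}$. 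Consequently $y^{+} - x^{+} \ge 0$ and $y^{-} - x^{-} \ge 0$ are disjoint and their difference is $y - x$, so $(y-x)^{+} = y^{+} - x^{+}$ and $(y-x)^{-} = y^{-} - x^{-}$. Now from $|x| \wedge |y-x| = 0$ and the inequalities $x^{+} \le |x|$, $y^{+} - x^{+} = (y-x)^{+} \le |y-x|$ we obtain $x^{+} \wedge (y^{+} - x^{+}) = 0$, i.e. $x^{+} \sqsubseteq y^{+}$; and likewise $x^{-} \wedge (y^{-} - x^{-}) = 0$, i.e. $x^{-} \sqsubseteq y^{-}$.

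The main obstacle I anticipate is the bookkeeping in the converse direction: one must justify that $x \sqsubseteq y$ already implies the order inequalities $x^{\pm} \le y^{\pm}$ (equivalently, that $y - x$ has positive part $y^{+} - x^{+}$), since this is what makes the infimum computation clean. This is a standard Riesz-space manipulation — it follows from the disjointness $|x| \perp |y-x|$ via the identities $x^+ \vee (y-x)^+ = x^+ + (y-x)^+$ and $y^+ \le x^+ + (y-x)^+$ together with $y^+ \perp (y-x)^- $ wait, more carefully one uses that in an Archimedean (indeed any) vector lattice, $a \perp b$ and $0 \le c \le a+b$ with $c \wedge$-type arguments pin down the parts — but it requires care to present without circularity. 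Everything else is routine distributivity of $\wedge$ over $+$ on disjoint components and the elementary fact that $u \perp v$, $0 \le u' \le u$, $0 \le v' \le v$ imply $u' \perp v'$.
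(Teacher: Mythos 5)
Your argument is correct in substance; note that the paper itself gives no proof of this proposition (it is quoted from [OPR]), so there is nothing to compare against. Both directions go through as you describe. The only passage that needs cleaning up is your derivation of $x^{+}\le y^{+}$ in the converse direction, which as written trails off; the clean way to get it is the standard identity that disjoint elements $u\perp v$ satisfy $(u+v)^{+}=u^{+}+v^{+}$ and $(u+v)^{-}=u^{-}+v^{-}$ (immediate from $|u+v|=|u|+|v|$). Applying this to $y=x\sqcup(y-x)$ gives at once $y^{+}=x^{+}+(y-x)^{+}$ and $y^{-}=x^{-}+(y-x)^{-}$, whence $(y-x)^{\pm}=y^{\pm}-x^{\pm}\ge 0$ and $x^{\pm}\wedge(y^{\pm}-x^{\pm})\le |x|\wedge|y-x|=0$. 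The same identity, applied to $y^{+}=x^{+}\sqcup(y^{+}-x^{+})$, also justifies your claim in the forward direction that $x^{+}\sqsubseteq y^{+}$ forces $0\le x^{+}\le y^{+}$ (so that $x^{+}$ lies under $y^{+}$ and is therefore disjoint from everything under $y^{-}$); with that, your four-fold disjointness and the subadditivity $(p+q)\wedge(r+s)\le p\wedge r+p\wedge s+q\wedge r+q\wedge s$ finish the proof as you indicate.
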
\label{le:02}
\begin{prop}\label{Riesz}
Let $E$ be a vector lattice and
$\bigsqcup\limits_{i=1}^{n}x_{i}=\bigsqcup\limits_{k=1}^{m}y_{k}$
for some  $(x_{i})_{i=1}^{n}$ and   $(y_{k})_{k=1}^{m}\subset E.$
Then there exist a family of pairwise disjoint elements $(z_{ik})\subset E$,
where $i\in\{1,\ldots,n\}$ and $k\in\{1,\ldots,m\}$ such that
\begin{enumerate}\label{R-1}
\item[$(i)$] $x_{i}=\bigsqcup\limits_{k=1}^{m}z_{ik}$ for any
$i\in\{1,\ldots,n\}$;
\item[$(ii)$] $y_k=\bigsqcup\limits_{i=1}^{n}z_{ik}$ for  any
$k\in\{1,\ldots,m\}$;
\item[$(iii)$]
$\bigsqcup\limits_{i=1}^{n}\bigsqcup\limits_{k=1}^{m}z_{ik}=\bigsqcup\limits_{
i=1}^{n}x_{i}=\bigsqcup\limits_{k=1}^{m}y_{k}$.
\end{enumerate}
\end{prop}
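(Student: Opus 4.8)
The plan is to reduce the statement to the case of positive elements, where it follows from the distributive law and the Boolean structure of the set of fragments of a fixed positive element, and then to recover the general case by splitting the $x_i$ and $y_k$ into positive and negative parts.

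Write $x:=\bigsqcup_{i=1}^{n}x_{i}=\bigsqcup_{k=1}^{m}y_{k}$. One first records the elementary facts that for disjoint $a,b$ one has $|a+b|=|a|+|b|$ and $(a+b)^{+}=a^{+}+b^{+}$. Consequently, since $x_{1},\dots,x_{n}$ are pairwise disjoint, the elements $x_{1}^{+},\dots,x_{n}^{+}$ are pairwise disjoint, likewise $x_{1}^{-},\dots,x_{n}^{-}$, one has $x_{i}^{+}\perp x_{j}^{-}$ for all $i,j$, and $x^{+}=\bigsqcup_{i}x_{i}^{+}$, $x^{-}=\bigsqcup_{i}x_{i}^{-}$ (in particular $x_{i}^{+}\le x^{+}$ and $x_{i}^{-}\le x^{-}$); the same statements hold for the family $(y_{k})$. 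Conceptually this splitting of the disjoint decomposition along positive and negative parts is exactly the content of Proposition~\ref{OPR}.

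For the positive case, suppose $0\le a=\bigsqcup_{i=1}^{n}a_{i}=\bigsqcup_{k=1}^{m}b_{k}$ and set $c_{ik}:=a_{i}\wedge b_{k}$. Since $E$ is distributive and a disjoint sum of positive elements equals their supremum, $\bigvee_{k}c_{ik}=a_{i}\wedge\bigvee_{k}b_{k}=a_{i}\wedge a=a_{i}$, and the $c_{i1},\dots,c_{im}$ are pairwise disjoint because the $b_{k}$ are; hence $a_{i}=\bigsqcup_{k}c_{ik}$, and symmetrically $b_{k}=\bigsqcup_{i}c_{ik}$. From $c_{ik}\le a_{i}$ and $c_{ik}\le b_{k}$ we get $c_{ik}\perp c_{i'k'}$ whenever $i\ne i'$ or $k\ne k'$, so the whole family $(c_{ik})$ is pairwise disjoint and, together with $a=\bigsqcup_{i}a_{i}$, this gives the analogue of (iii). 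Applying this to $x^{+}=\bigsqcup_{i}x_{i}^{+}=\bigsqcup_{k}y_{k}^{+}$ yields a pairwise disjoint family $(p_{ik})$ of positive elements with $x_{i}^{+}=\bigsqcup_{k}p_{ik}$ and $y_{k}^{+}=\bigsqcup_{i}p_{ik}$, and applying it to $x^{-}=\bigsqcup_{i}x_{i}^{-}=\bigsqcup_{k}y_{k}^{-}$ yields a pairwise disjoint family $(q_{ik})$ with $x_{i}^{-}=\bigsqcup_{k}q_{ik}$ and $y_{k}^{-}=\bigsqcup_{i}q_{ik}$.

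Finally, put $z_{ik}:=p_{ik}-q_{ik}$. Since $p_{ik}\le x^{+}$ and $q_{i'k'}\le x^{-}$ are disjoint for all indices, $z_{ik}^{+}=p_{ik}$ and $z_{ik}^{-}=q_{ik}$, so $|z_{ik}|=p_{ik}\vee q_{ik}$; distributing the meet then gives $|z_{ik}|\wedge|z_{i'k'}|=0$ for $(i,k)\ne(i',k')$, i.e. $(z_{ik})$ is pairwise disjoint. Summing over $k$ gives $\sum_{k}z_{ik}=x_{i}^{+}-x_{i}^{-}=x_{i}$, which together with the disjointness of $z_{i1},\dots,z_{im}$ is exactly (i); symmetrically one obtains (ii), and (iii) follows from (i) and the pairwise disjointness of the full family. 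I do not expect a genuine obstacle here; the argument is largely organizational, the one point requiring care being the repeated verification that the doubly-indexed families $(c_{ik})$ and $(z_{ik})$ are pairwise disjoint, where one has to play the disjointness coming from the $i$-index off against that coming from the $k$-index by means of the distributive law of the lattice.
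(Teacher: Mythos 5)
Your proof is correct and follows essentially the same route as the paper's: both split the common decomposition into positive and negative parts via Proposition~\ref{OPR}, produce a double decomposition of $x^{+}$ and of $x^{-}$ separately, and recombine as $z_{ik}=z_{ik}^{+}-z_{ik}^{-}$. The only difference is that where the paper invokes the Riesz Decomposition property to obtain the positive double decomposition, you construct it explicitly as $c_{ik}=a_{i}\wedge b_{k}$ using the distributive law --- which, since the summands are pairwise disjoint, is exactly the decomposition the Riesz property yields.
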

\begin{proof}
Using  Proposition~\ref{OPR}, and applying induction arguments we have
\begin{gather*}
\bigsqcup\limits_{i=1}^{n}x_i^{+}=\Bigl(\bigsqcup\limits_{i=1}^{n}x_i\Bigl)^{+}
=
\Bigl(\bigsqcup\limits_{k=1}^{m}y_k\Bigl)^{+}=\bigsqcup\limits_{k=1}^{m}y_k^{+}
;\\
\bigsqcup\limits_{i=1}^{n}x_i^{-}=\Bigl(\bigsqcup\limits_{i=1}^{n}x_i\Bigl)^{-}
=
\Bigl(\bigsqcup\limits_{k=1}^{m}y_k\Bigl)^{-}=\bigsqcup\limits_{k=1}^{m}y_k^{-}.
\end{gather*}
Now, by the Riesz Decomposition property 
there exist
$$
z_{ik}^{+},z_{ik}^{-}\in E_{+ }, \quad i\in\{1,\ldots,n\}\; \mbox{ and }
\;k\in\{1,\ldots,m\}
$$
such that
\begin{gather*}
x_{i}^{+}=\bigsqcup\limits_{k=1}^{m}z_{ik}^{+},  \quad
x_i^{-}=\bigsqcup\limits_{k=1}^{m}z_{ik}^{-}, \;\, i\in\{1,\ldots,n\}, \\
y_{k}^{+}=\bigsqcup\limits_{i=1}^{n}z_{ik}^{+},  \quad
y_k^{-}=\bigsqcup\limits_{i=1}^{n}z_{ik}^{-}, \;\,k\in\{1,\ldots,m\}.
\end{gather*}
Set $z_{ik}=z_{ik}^{+}-z_{ik}^{-}, \quad   i\in\{1,\ldots,n\} \mbox{ and }
\;k\in\{1,\ldots,m\}$.
Hence
\begin{gather*}
\bigsqcup\limits_{i=1}^{n}\bigsqcup\limits_{k=1}^{m}z_{ik}=\bigsqcup\limits_{i=1
}^{n}x_{i}=\bigsqcup\limits_{k=1}^{m}y_{k}
\end{gather*}
and  the proof is completed.
\end{proof}

\begin{lemma}\label{lemma-direct}
Let $E$ and $F$ be  vector lattices with $F$  Dedekind complete,
$S,T\in\mathcal{OA}_{r}(E,F)$ and $x\in E$. Then the following equalities hold:
\begin{enumerate}\label{formulas}
\item~$(T\vee S)x=\sup\Big\{\sum\limits_{i=1}^{n}Tx_i\vee
Sx_i:\,x=\bigsqcup\limits_{i=1}^{n}x_i,\,n\in\N\Big\}$;
\item~$(T\wedge S)x=\inf\Big\{\sum\limits_{i=1}^{n}Tx_i\wedge
Sx_i:\,x=\bigsqcup\limits_{i=1}^{n}x_i,\,n\in\N\Big\}$;
\item~$|T|x=\sup\Big\{\sum\limits_{i=1}^{n}|Tx_i|:\,x=\bigsqcup\limits_{i=1}^{n}
x_i,\,n\in\N\Big\}$.
\end{enumerate}
\end{lemma}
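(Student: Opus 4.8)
The plan is to derive all three identities from the Riesz--Kantorovich formulas of Proposition~\ref{thm:PK}, combined with the fact that $T\vee S$, $T\wedge S$ and $|T|$ are again (regular, hence orthogonally additive) operators, since $\mathcal{OA}_{r}(E,F)$ is a Dedekind complete vector lattice. I shall prove $(1)$ in detail; statement $(2)$ follows by the dual argument (replace every supremum by an infimum, every $\vee$ by $\wedge$, and reverse all the inequalities), and $(3)$ is obtained from $(1)$ applied to the pair $T$, $-T$, because $|T|=T\vee(-T)$ in $\mathcal{OA}_{r}(E,F)$ and $Tx_{i}\vee(-Tx_{i})=|Tx_{i}|$ in $F$ for every $x_{i}$.

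Fix $x\in E$ and set $A:=\big\{\sum_{i=1}^{n}Tx_{i}\vee Sx_{i}:\ x=\bigsqcup_{i=1}^{n}x_{i},\ n\in\N\big\}$; taking $n=1$, $x_{1}=x$ shows $A\neq\emptyset$. For the inequality $\sup A\le(T\vee S)x$, let $x=\bigsqcup_{i=1}^{n}x_{i}$ be an arbitrary finite disjoint decomposition. Since $T\vee S\ge T$ and $T\vee S\ge S$ in the ordered vector space $\mathcal{OA}_{r}(E,F)$ (and this order is evaluated at \emph{every} element of $E$), we get $(T\vee S)x_{i}\ge Tx_{i}$ and $(T\vee S)x_{i}\ge Sx_{i}$, hence $(T\vee S)x_{i}\ge Tx_{i}\vee Sx_{i}$ for each $i$. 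Summing and using that $T\vee S$ is orthogonally additive,
\[
\sum_{i=1}^{n}Tx_{i}\vee Sx_{i}\ \le\ \sum_{i=1}^{n}(T\vee S)x_{i}\ =\ (T\vee S)\Big(\bigsqcup_{i=1}^{n}x_{i}\Big)\ =\ (T\vee S)x .
\]
Thus $(T\vee S)x$ is an upper bound of $A$; as $F$ is Dedekind complete, $\sup A$ exists and $\sup A\le(T\vee S)x$.

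For the reverse inequality I use the Riesz--Kantorovich formula $(T\vee S)x=\sup\{Ty+Sz:\ x=y\sqcup z\}$. Given any decomposition $x=y\sqcup z$, the pair $x_{1}=y$, $x_{2}=z$ is a two-summand disjoint decomposition of $x$, and since $Ty\le Ty\vee Sy$ and $Sz\le Tz\vee Sz$,
\[
Ty+Sz\ \le\ (Ty\vee Sy)+(Tz\vee Sz)\ \in\ A ,
\]
so $Ty+Sz\le\sup A$. Taking the supremum over all such $y,z$ gives $(T\vee S)x\le\sup A$, which together with the previous paragraph yields $(1)$; formulas $(2)$ and $(3)$ follow as indicated. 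There is no serious obstacle here; the only points needing care are that the order on $\mathcal{OA}_{r}(E,F)$ is taken at every element of $E$ (so $T\vee S\ge T$ gives $(T\vee S)x_{i}\ge Tx_{i}$ even for non-positive $x_{i}$), and the asymmetry between the two kinds of decompositions — two-summand decompositions already suffice for the ``$\le$'' direction through Riesz--Kantorovich, while arbitrary finite decompositions enter only to show that $(T\vee S)x$ dominates the whole set $A$.
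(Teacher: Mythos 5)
Your proof is correct and follows essentially the same route as the paper: both directions rest on the pointwise inequality $(T\vee S)x_i\geq Tx_i\vee Sx_i$ combined with orthogonal additivity of $T\vee S$ for the upper bound, and on the Riesz--Kantorovich formula applied to two-term decompositions for the reverse inequality, with $(2)$ and $(3)$ reduced to $(1)$ in the same way. The only difference is that the paper additionally shows the set of sums is upward directed (via its refinement proposition for disjoint decompositions); that fact is not needed for the equality itself but is used in the subsequent lemma, so your omission of it is harmless here.
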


\begin{proof}
Since $x\wedge y=-(-x)\vee(-y)$ and $|x|=x\vee(-x)$  for proving the
lemma it is sufficient to establish only the equation $(1)$. Put
$$
\mathfrak{A}(x):=\Big\{\sum\limits_{i=1}^{n}Tx_i\vee
Sx_i:\,x=\bigsqcup\limits_{i=1}^{n}x_i,\,n\in\N\Big\}.
$$
We show that $\mathfrak{A}(x)$ is an upward directed set. Indeed, take two
disjoint decompositions
$x=\bigsqcup\limits_{i=1}^{n}x_i$
and $x=\bigsqcup\limits_{k=1}^{m}y_k$ of $x$. By Proposition~\ref{Riesz} there
exists a disjoint
decomposition $x=\bigsqcup\limits_{i=1}^{n}\bigsqcup\limits_{k=1}^{m}z_{ik}$
such that
\begin{gather*}
x_{i}=\bigsqcup\limits_{k=1}^{m}z_{ik}, \;\, i\in\{1,\ldots,n\},\\
y_k=\bigsqcup\limits_{i=1}^{n}z_{ik}, \;\, k\in\{1,\ldots,m\}.
\end{gather*}
We observe that
$$
Sx_{i}\vee Tx_{i}\leq \sum\limits_{k=1}^{m}Tz_{ik}\vee Sz_{ik}
$$
for any $i\in\{1,\ldots,n\}$. Indeed
\begin{gather*}
Sx_i=S\Big(\bigsqcup\limits_{k=1}^{m}z_{ik}\Big)=\sum\limits_{k=1}^{m}Sz_{ik}
\leq \sum\limits_{k=1}^{m}Tz_{ik}\vee Sz_{ik}, \\
Tx_i=T\Big(\bigsqcup\limits_{k=1}^{m}z_{ik}\Big)=\sum\limits_{k=1}^{m}Tz_{ik}
\leq \sum\limits_{k=1}^{m}Tz_{ik}\vee Sz_{ik}
 \Longrightarrow\\
Sx_{i}\vee Tx_{i}\leq \sum\limits_{k=1}^{m}Tz_{ik}\vee Sz_{ik}.
\end{gather*}
Similar arguments show that
$$
Sy_{k}\vee Ty_{k}\leq \sum\limits_{i=1}^{n}Tz_{ik}\vee Sz_{ik}
$$
for any $k\in\{1,\ldots,m\}$.
Now we may write
\begin{gather*}
\sum\limits_{i=1}^{n}Tx_i\vee
Sx_i\leq\sum\limits_{i=1}^{n}\sum\limits_{k=1}^{m}Tz_{ik}\vee
Sz_{ik}\quad \mbox{and} \quad \sum\limits_{k=1}^{m}Ty_k\vee
Sy_k\leq\sum\limits_{i=1}^{n}\sum\limits_{k=1}^{m}Tz_{ik}\vee
Sz_{ik},
\end{gather*}
and deduce that $\mathfrak{A}(x)$ is an upward directed subset of
$F$. By Proposition~\ref{thm:PK} $\mathcal{OA}_{r}(E,F)$ is a
Dedekind complete vector lattice and 
\[(T\vee S)x=\sup\{Ty+Sz:\,x=y\sqcup z\}. \] 
Clearly $Tx\vee Sx\leq (T\vee S)x$
for any $x\in E$. Thus for any  decomposition
$x=\bigsqcup\limits_{i=1}^{n}x_i, n\in \N$ we have that
\begin{gather*}
\sum\limits_{i=1}^{n}Tx_i\vee Sx_i\leq\sum\limits_{i=1}^{n}(T\vee S)x_i=(T\vee
S)x
\end{gather*}
and therefore the set $\mathfrak{A}(x)$ is upper order bounded. Put
$f_x:=\sup\mathfrak{A}(x)$. Then $f_x\leq(T\vee S)x$. On the other
hand for any disjoint decomposition $x=y\sqcup z$ we have
\begin{gather*}
Ty+Sz\leq \sup\Big\{Ty\vee Sy+Tz\vee Sz:\,x=y\sqcup z\Big\}\leq\\
\sup\Big\{\sum\limits_{i=1}^{n}Tx_i\vee
Sx_i:\,x=\bigsqcup\limits_{i=1}^{n}x_i,\,n\in\N\Big\}=f_x.
\end{gather*}
Passing to the supremum 
in the last inequality over all disjoint decompositions $x=y\sqcup z$ of $x$, due to
formula $(1)$ of Proposition \ref{thm:PK}, we deduce $(T\vee
S)x\leq f_x$. It follows $(T\vee S)x=f_x$ which completes the proof.
\end{proof}
Below we shall use the following elementary observation. Since
\[ \Big\{\sum\limits_{i=1}^{n}Sx_i\vee Tx_i:\,x=\bigsqcup\limits_{i=1}^{n}x_i,\,n\in\N\Big\}\] 
is an upward
downward directed subset of $F$ the set
\[ \Big\{\sum\limits_{i=1}^{n}Sx_i\wedge Tx_i:\,x=\bigsqcup\limits_{i=1}^{n}x_i,\,n\in\N\Big\}\] 
is  downward
directed.

\begin{lemma}\label{lemma-frag}
Let be $E$ a vector lattice,  $F$ a Banach lattice with order
continuous norm and $T\in\mathcal{COA}_{r}(E,F)$. Then
$\mathcal{F}_{T}\subset\mathcal{COA}_{r}(E,F)$.
\end{lemma}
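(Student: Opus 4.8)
The plan is to fix $x\in E$ and a fragment $S\in\mathcal F_T$, and to prove that $S(\mathcal F_x)$ is relatively compact in $F$. Since $F$ has order continuous norm it is Dedekind complete, so by Proposition~\ref{thm:PK} the space $\mathcal{OA}_r(E,F)$ is a Dedekind complete vector lattice; thus $S\sqsubseteq T$ is equivalent to $S+(T-S)=T$ together with $|S|\wedge|T-S|=0$, and Proposition~\ref{thm:PK}(5) already gives $|Sy|\le|S|(y)\le|T|(x)$ for all $y\in\mathcal F_x$, so $S(\mathcal F_x)$ is order bounded. A first reduction, via Proposition~\ref{OPR} applied inside $\mathcal{OA}_r(E,F)$ and the pointwise identity $Sy=S^{+}y-S^{-}y$, brings the problem to a positive fragment $S$ of a positive operator $T$; one records here that $T^{+},T^{-}$ (hence $|T|$) are themselves $C$-compact, which I would get from Lemma~\ref{lemma-direct}(1),(3): the values $T^{\pm}(y)$ are order limits of finite sums $\sum_i(Ty_i)^{\pm}$ along disjoint refinements $y=\bigsqcup_iy_i$, these sums converge in norm because the norm of $F$ is order continuous, and they stay inside a totally bounded set built from $T(\mathcal F_x)$.

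For the positive case the idea is to use order continuity of the norm of $F$ to localise the disjointness of $S$ and $T-S$. By Lemma~\ref{lemma-direct}(2) applied to the pair $S,\,T-S$ one has, for each $y\in\mathcal F_x$,
\[
0=\bigl(S\wedge(T-S)\bigr)(y)=\inf\Bigl\{\sum_{i=1}^{n}Sy_i\wedge(T-S)y_i\ :\ y=\bigsqcup_{i=1}^{n}y_i,\ n\in\N\Bigr\},
\]
and the set on the right is downward directed by the remark following Lemma~\ref{lemma-direct}; hence these sums decrease to $0$ in order, and, the norm of $F$ being order continuous, in norm. So, given $\varepsilon>0$, there is a finite disjoint decomposition $y=\bigsqcup_{i=1}^{n}y_i$ with $\sum_i\|Sy_i\wedge(T-S)y_i\|<\varepsilon$. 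Writing $a_i=Sy_i\ge0$, $b_i=(T-S)y_i\ge0$, so that $a_i+b_i=Ty_i\in T(\mathcal F_x)$: the element $a_i$ equals the band projection of $Ty_i$ onto $\{a_i\}^{\perp\perp}$ up to a vector of norm $\|a_i\wedge b_i\|$, and $(a_i-b_i)^{+}=a_i-a_i\wedge b_i$ is a fragment of $Ty_i-2(a_i\wedge b_i)$. One then selects the fragment $z:=\bigsqcup\{\,y_i:\ b_i\le a_i\,\}\sqsubseteq y$ and estimates $\|Sy-Tz\|$ by summing the errors $\|a_i\wedge b_i\|$; the target is $S(\mathcal F_x)\subseteq T(\mathcal F_x)+\varepsilon B_F$ for every $\varepsilon>0$. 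Since $T(\mathcal F_x)$ is totally bounded and $F$ is complete, this yields that $S(\mathcal F_x)$ is relatively compact, i.e. $S\in\mathcal{COA}_r(E,F)$, which proves $\mathcal F_T\subset\mathcal{COA}_r(E,F)$.

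The step I expect to be the real obstacle is precisely the estimate $\|Sy-Tz\|<C\varepsilon$: the inequality $\|a_i\wedge b_i\|<\varepsilon_i$ does \emph{not} force $\|a_i\|$ or $\|b_i\|$ to be small (they may both be large while almost disjoint in $F$), so a crude choice of $z$ is not enough. Closing the gap requires refining the decomposition $y=\bigsqcup y_i$ further and controlling the newly created pieces through the relative compactness — and not merely the order boundedness — of $T(\mathcal F_x)$, so that only finitely many ``scales'' of the vectors $Ty_i$ are relevant; it is the simultaneous use of this compactness and of the order continuity of the norm of $F$ that has to make the refinement terminate.
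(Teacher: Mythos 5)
Your proposal does not close the proof; the gap you flag at the end is real and it is exactly where the paper's argument does something different. If you decompose $y=\bigsqcup_{i=1}^{n}y_i$ so that $\sum_i\|Sy_i\wedge(T-S)y_i\|<\varepsilon$ and then set $z=\bigsqcup\{y_i:\ b_i\le a_i\}$, the difference $Sy-Tz=\sum_{i\notin I}a_i-\sum_{i\in I}b_i$ contains, for each index with $b_i\not\le a_i$, the full term $a_i=Sy_i$, which can be large even though $a_i\wedge b_i$ is small (two almost disjoint large positive vectors). So $S(\mathcal F_x)\subseteq T(\mathcal F_x)+\varepsilon B_F$ is not obtained, and the suggested repair (refine the decomposition using relative compactness of $T(\mathcal F_x)$ to control ``scales'') is not carried out and has no obvious implementation: the decomposition you use depends on $y$, so nothing is uniform over $\mathcal F_x$. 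There is also a secondary problem in your reduction to positive operators: you assume $T^{+}$ and $T^{-}$ are $C$-compact and propose to deduce this from Lemma~\ref{lemma-direct}(1),(3) because the approximating sums $\sum_i(Ty_i)^{\pm}$ ``stay inside a totally bounded set built from $T(\mathcal F_x)$''; they are order bounded by $|T|(x)$, but total boundedness of the set of all such sums (over decompositions of unbounded length $n$) is unsubstantiated. In the paper the $C$-compactness of $T^{\pm}$ is a \emph{consequence} of this lemma ($T^{\pm}$ are fragments of $T$), so the reduction as stated is circular.

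The paper avoids all of this by building a finite $\varepsilon$-net for $S(\mathcal F_x)$ directly, without trying to approximate $Sy$ by values of $T$. One fixes a single decomposition $x=\bigsqcup_{i=1}^{n}x_i$ of $x$ (not of $y$) with $\bigl\|\sum_{i=1}^{n}|S|x_i\wedge|T-S|x_i\bigr\|<\varepsilon/4$, chooses finite $\varepsilon/(2n)$-nets $D_i\subset\mathcal F_{x_i}$ for $T$ on each $\mathcal F_{x_i}$, and for $y=\bigsqcup y_i$ with $y_i\sqsubseteq x_i$ compares $Sy_i$ with $Su_i$, $u_i\in D_i$, via the lattice inequality $|a|\le|a+b|+|a|\wedge|b|$ applied to $a=Sy_i-Su_i$ and $b=(T-S)y_i-(T-S)u_i$. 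This yields
\begin{equation*}
|Sy_i-Su_i|\ \le\ |Ty_i-Tu_i|+2\bigl(|S|x_i\wedge|T-S|x_i\bigr),
\end{equation*}
where the second summand is controlled \emph{uniformly} over all fragments of $x_i$ because every positive orthogonally additive operator $G$ satisfies $Gw\le Gx_i$ for $w\sqsubseteq x_i$. Summing over $i$ shows $\{Su:\ u\in\bigsqcup_i D_i\}$ is a finite $\varepsilon$-net for $S(\mathcal F_x)$. These two ingredients --- the fixed decomposition of $x$ together with the monotonicity of positive orthogonally additive operators on fragments, and the inequality $|a|\le|a+b|+|a|\wedge|b|$ --- are what your argument is missing.
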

\begin{proof}
We recall that by definition $S\in\mathcal{F}_{T}$ if $|S|\wedge
|T-S|=0$. We show that $S(\mathcal{F}_{x})$ is a relatively compact
set\footnote{\, Due to the norm-completeness of $F$ this is
equivalent to its totally boundedness.}  in $F$ for any
$S\in\mathcal{F}_{T}$ and $x\in E$. Indeed, fix $x\in E$ and
$\varepsilon > 0$. Since
$\Big\{\sum\limits_{i=1}^{n}|S|x_i\wedge
|T-S|x_i:\,x=\bigsqcup\limits_{i=1}^{n}x_i,\,n\in\N\Big\}$ is a
downward directed subset of $F$ by Lemma~\ref{lemma-direct} and the
order continuity of the norm in the Banach lattice $F$  there exists
a disjoint decomposition $x=\bigsqcup\limits_{i=1}^{n}x_i$ of $x$
such that
\[
\Big\|\sum\limits_{i=1}^{n}|S|x_i\wedge |T-S|x_i\Big\|
<\frac{\varepsilon}{4}.
\]
Since $T(\mathcal{F}_{x_i})$ is a relatively compact set \sideremark{\tblue{$x_i$}}
for any $i \in \{1,\ldots, n\}$ there exists a finite subset
$D_i\subset \mathcal{F}_{x_i}$ such that for any $w\in
\mathcal{F}_{x_i}$ there is $u\in D_i$ satisfying
$$
\|Tw-Tu\|<\frac{\varepsilon}{2n}.
$$
Moreover, by Proposition~\ref{Riesz} for any $y\in\mathcal{F}_{x}$
there exists a disjoint decomposition
$y=\bigsqcup\limits_{i=1}^{n}y_i$, where $y_i\sqsubseteq x_i$, i.e.
$y_i\in\mathcal{F}_{x_i}$. Hence, for some $u_i\in D_i$ the
following inequality
$$
\|Ty_i-Tu_i\| < \frac{\varepsilon}{2n}
$$
holds. We remark that for any positive orthogonally additive
operator $G:E\to F$ one has\footnote{\;This immediately follows from
$\bl y\br \wedge \bl x-y\br=0$, the positivity of $G$ and
$G(x)=G(y)+G(x-y)\geq G(y)$.} $Gw\leq Gx_i$ for all
$w\in\mathcal{F}_{x_i}$. Now, by taking into account the inequality
$|x|\leq |x+y|+|x|\wedge |y|$, which holds in any vector
lattice\footnote{\;Indeed,  since (see \cite{Zaa}, Theorems 5.1 and
5.5) $ |x+y|\geq||x|-|y||\geq(|x|-|y|)^+$ we have
\[
|x|-|x|\wedge|y|=|x|+(-|x|)\vee(-|y|)=0\vee(|x|-|y|)=(|x|-|y|)^+\leq|x+y|.
\]}
we have
\[
 \begin{array}{lll}
 |Sy_i-Su_i| \leq  &  &  \\
|Sy_i-Su_i+ (T-S)y_i-(T-S)u_i|  +  |Sy_i-Su_i|\wedge|(T-S)y_i-(T-S)u_i|  \leq
&   &   \\
|Ty_i-Tu_i|+(|Sy_i|+|Su_i|)\wedge(|(T-S)y_i|+|(T-S)u_i|)                 \leq
&   &  \\
|Ty_i-Tu_i|+(|S|y_i+|S|u_i)\wedge(|T-S|y_i+|T-S|u_i)                      \leq
&   &    \\
|Ty_i-Tu_i|+(|S|x_i+|S|x_i)\wedge(|T-S|x_i+|T-S|x_i)                        =
&   &     \\
|Ty_i-Tu_i|+ 2\Big(|S|x_i\wedge|T-S|x_i\Big). &   &
 \end{array}
\]
Then we may write
\begin{align*}
\sum\limits_{i=1}^{n}|Sy_i-Su_i|\leq \sum\limits_{i=1}^{n}|Ty_i-Tu_i|+
 2\sum\limits_{i=1}^{n}\Big(|S|x_i\wedge|T-S|x_i\Big).
\end{align*}
Put $D:=\{u=\bigsqcup\limits_{i=1}^{n}u_i:\,\,\,u_i\in D_i\}.$
Clearly $D$ is a finite subset of $\mathcal{F}_{x}$.
Thus for any $y\in\mathcal{F}_{x}$ and $u\in D$ we have
\begin{align*}
\|Sy-Su\|=
\Big\|\big|S\Big(\bigsqcup\limits_{i=1}^{n}y_i\Big)-S\Big(\bigsqcup\limits_{i=1}
^ nu_i\Big)\big|\Big\|  =
\Big\|\big|\sum\limits_{i=1}^{n}Sy_i-Su_i\big|\Big\|\leq  \\
\Big\|\sum\limits_{i=1}^{n}|Sy_i-Su_i|\Big\|\leq
\Big\|\sum\limits_{i=1}^{n}|Ty_i-Tu_i|\Big\|+
 2\Big\|\sum\limits_{i=1}^{n}\Big(|S|x_i\wedge|T-S|x_i\Big)\Big\|\leq  \\
\sum\limits_{i=1}^{n}\|Ty_i-Tu_i\|+\frac{\varepsilon}{2} \leq \varepsilon.
\hspace{8.3cm}
\end{align*}
Hence $\{Su\colon u\in D\}$ is a finite $\eps$-net for $S(\mathcal{F}_x)$,
and therefore the proof is completed.
\end{proof}

Let $V$ be a vector lattice and $v\in V_{+}$. The order ideal
generated by $v$ is denoted by $I_{v}$, i.e. $I_v=\{x\in V\colon
\exists \alpha>0 \;\mbox{ such that }\;|x|\leq \alpha |v| \}$. A
{\it $v$-step function} is any vector $s\in V$ for which there exist
pairwise disjoint fragments $v_{1},\dots,v_{n}$ of $v$ with
$v=\bigsqcup\limits_{i=1} v_{i}^n$ and real numbers
$\lambda_{1},\dots,\lambda_{n}$ satisfying
$s=\sum\limits_{i=1}^{n}\lambda_{i}v_{i}$. 
The next proposition is known as the Freudenthal Spectral Theorem (\cite{AB}, Theorem~2.8).

\begin{prop}\label{Fr}
Let $V$ be a vector lattice with the principal projection property
and let $v\in V_{+}$. Then for every $u\in I_v$ there exists a
sequence $(s_n)_{n\in \N}$ of $v$-step functions satisfying $\,0\leq
u-s_{n}\leq\frac{1}{n}v\,$  for each $n$ and $s_{n}\uparrow u$.
Moreover, $0\leq s_n$ if $\,0\leq u$.
\end{prop}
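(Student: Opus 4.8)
The plan is to recover the spectral components of $u$ relative to $v$ from the band projections supplied by the principal projection property, and then to read off the $v$-step functions $s_n$ from a sufficiently fine partition of a bounded scale, estimating the remainder block by block. First I would reduce to the case $u\ge 0$: since $u\in I_v$ there is $\beta>0$ with $|u|\le\beta v$, and once the statement is known for positive elements of $I_v$, applying it to $u+\beta v\ge 0$ (which lies in $I_v$, being $\le 2\beta v$) produces $v$-step functions $\sigma_n\uparrow u+\beta v$ with $0\le(u+\beta v)-\sigma_n\le\frac1n v$; then $s_n:=\sigma_n-\beta v$ is again a $v$-step function (subtract $\beta$ from every coefficient), $s_n\uparrow u$, and $0\le u-s_n\le\frac1n v$. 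So from now on assume $0\le u\le\beta v$.

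For each real $\lambda\ge 0$ let $P_\lambda$ be the band projection onto the principal band $\{(u-\lambda v)^{+}\}^{\perp\perp}$ (it exists by hypothesis) and put $e_\lambda:=(I-P_\lambda)v$, a component of $v$. The elementary facts I would record are: $\lambda\mapsto e_\lambda$ is increasing, because $(u-\mu v)^{+}\le(u-\lambda v)^{+}$ for $\lambda\le\mu$ gives $P_\mu\le P_\lambda$; $P_0$ is the projection onto $\{u\}^{\perp\perp}$, so $(I-P_0)u=0$; and $P_\lambda=0$, $e_\lambda=v$ once $\lambda\ge\beta$, since then $(u-\lambda v)^{+}=0$. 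The two crucial inequalities follow from applying $P_\lambda$ and $I-P_\lambda$ to $u-\lambda v$: as $(u-\lambda v)^{+}\perp(u-\lambda v)^{-}$, the band generated by the positive part kills the negative part, so $P_\lambda(u-\lambda v)=(u-\lambda v)^{+}\ge 0$ and $(I-P_\lambda)(u-\lambda v)=-(u-\lambda v)^{-}\le 0$, i.e.
\[
\lambda(v-e_\lambda)\le P_\lambda u\qquad\text{and}\qquad (I-P_\lambda)u\le\lambda e_\lambda .
\]

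Now fix $n$, set $\lambda_k:=k/n$ for $0\le k\le m$ with $m:=\lceil n\beta\rceil+1$ (so $P_{\lambda_m}=0$), and let $Q_0:=I-P_{\lambda_0}$ and $Q_k:=P_{\lambda_{k-1}}-P_{\lambda_k}$ for $1\le k\le m$; these are pairwise disjoint band projections summing to $I$, so $v=\bigsqcup_{k=0}^{m}Q_k v$ is a partition of $v$ into components, and $s_n:=\sum_{k=1}^{m}\lambda_{k-1}Q_k v$ (coefficient $0$ on $Q_0 v$) is a $v$-step function with $s_n\ge 0$. From $Q_0 u=(I-P_0)u=0$ one gets $u-s_n=\sum_{k=1}^{m}Q_k(u-\lambda_{k-1}v)$; since $Q_k\le P_{\lambda_{k-1}}$ the first inequality above gives $Q_k(u-\lambda_{k-1}v)=Q_kP_{\lambda_{k-1}}(u-\lambda_{k-1}v)\ge 0$, and since $Q_k\le I-P_{\lambda_k}$ the second gives $Q_k(u-\lambda_k v)\le 0$, that is $Q_k(u-\lambda_{k-1}v)\le(\lambda_k-\lambda_{k-1})Q_k v=\frac1n Q_k v$. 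Summing over $k$ yields $0\le u-s_n\le\frac1n v$ (and $0\le s_n\le u$). To get monotonicity I would pass to $\tilde s_n:=s_1\vee\dots\vee s_n$, again a $v$-step function since a finite supremum of $v$-step functions is one (refine to a common partition inside the Boolean algebra of components of $v$); then $s_n\le\tilde s_n\le u$, so $\tilde s_n\uparrow$ and $0\le u-\tilde s_n\le\frac1n v$. For any upper bound $t$ of $\{\tilde s_n\}$ we have $u-t\le u-s_n\le\frac1n v$, hence $(u-t)^{+}\le\frac1n v$ for all $n$, and the Archimedean hypothesis forces $u\le t$; thus $\tilde s_n\uparrow u$, and undoing the reduction step finishes the general case.

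I expect the main obstacle to be the bookkeeping of the third step rather than a single deep point: teasing the two operator inequalities out of the band-disjointness of $(u-\lambda v)^{\pm}$ and then pushing them through the intermediate projections $Q_k$ so that the remainder lands exactly in $[0,\frac1n v]$. A secondary subtlety is that the naive $s_n$ need not be increasing, which is why one replaces them by the running suprema and uses the Archimedean property to identify the supremum as $u$.
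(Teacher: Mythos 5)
Your argument is correct. Note that the paper does not prove this statement at all: it is the Freudenthal Spectral Theorem, quoted with a citation to Aliprantis--Burkinshaw. Your proof is a sound self-contained reconstruction of the standard argument found there: the spectral projections $P_\lambda$ onto $\{(u-\lambda v)^{+}\}^{\perp\perp}$, the two inequalities $\lambda P_\lambda v\le P_\lambda u$ and $(I-P_\lambda)u\le\lambda(I-P_\lambda)v$ extracted from the disjointness of $(u-\lambda v)^{\pm}$, the telescoping partition $Q_k=P_{\lambda_{k-1}}-P_{\lambda_k}$, and the passage to running suprema together with the (standing) Archimedean assumption to upgrade the uniform estimate to $s_n\uparrow u$. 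All the steps you flag as routine (band projections commute, components of $v$ form a Boolean algebra, finite suprema of $v$-step functions are $v$-step functions via a common refinement) are indeed standard, so there is nothing to repair.
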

Now we ready to prove the first main result.
\begin{proof}[Proof of Theorem~\ref{thm-02}]
We prove some properties of $\mathcal{COA}_{r}(E,F)$:  \\
a) Clearly, $\mathcal{COA}_{r}(E,F)$ is a vector subspace of
$\mathcal{OA}_{r}(E,F)$. \\
b) We show that $\mathcal{COA}_{r}(E,F)$ is even a vector sublattice
of $\mathcal{OA}_{r}(E,F)$. Take $S,\, T\in\mathcal{COA}_{r}(E,F)$.
Then $T-S\in \mathcal{COA}_{r}(E,F)$. By Lemma~\ref{lemma-frag}
$\mathcal{F}_{T}\subset\mathcal{COA}_{r}(E,F)$ and therefore
$T^+\in\mathcal{COA}_{r}(E,F)$. Therefore due to the equalities
\begin{align*}
S+(T-S)^+=S+(T-S)\vee 0=T\vee S, \quad
S\wedge T=-(-S)\vee(-T)
\end{align*}
which are valid in $\mathcal{OA}_{r}(E,F)$
we obtain that $\mathcal{COA}_{r}(E,F)$ is a sublattice of
$\mathcal{OA}_{r}(E,F)$. \\
c) Now we show that $T\in\mathcal{COA}_{r}(E,F)$ if $0\leq
T_{\lambda}\uparrow T$ in $\mathcal{OA}_{r}(E,F)$ and any
$T_\lambda\in\mathcal{COA}_{r}(E,F)$. Indeed, take $x\in E$ and
$\varepsilon > 0$. Since the Banach lattice $F$ is order continuous
it follows from $T_{\lambda}x\uparrow Tx$ that
$\|Tx-T_{\lambda_{0}}x\|<\frac{\varepsilon}{4}$ for some
$\lambda_0$. Then actually
$\|Ty-T_{\lambda_{0}}y\|<\frac{\varepsilon}{4}$ for any
$y\in\mathcal{F}_{x}$. Indeed, consider $x=y\sqcup z$ for some $z\in
E$. Then
\[
 0  \leq  Tx-T_{\lambda_{0}}x  =  T(y\sqcup z)-T_{\lambda_{0}}(y\sqcup z)=
Ty-T_{\lambda_{0}}y+Tz-T_{\lambda_{0}}z\geq Ty-T_{\lambda_{0}}y
\]
implies $\|Ty-T_{\lambda_{0}}y\|\leq\|Tx-T_{\lambda_{0}}x\|$. Since
$T_{\lambda_{0}}\in\mathcal{COA}_{r}(E,F)$ there exists a finite
subset $D$ of $\mathcal{F}_{x}$ with the property that for any
$y\in\mathcal{F}_{x}$ there exists $u\in D$ satisfying
\[
\|T_{\lambda_{0}}u-T_{\lambda_{0}}y\|<\frac{\varepsilon}{2}.
\]
So we obtain
\begin{align*}
 \|Tu-Ty\|\leq\|Tu-T_{\lambda_{0}}u+T_{\lambda_{0}}u-Ty+T_{\lambda_{0}}y-T_{
\lambda_{0}}y\|\leq\\
\|Tu-T_{\lambda_{0}}u\|+\|Ty-T_{\lambda_{0}}y\|+\|T_{\lambda_{0}}u-T_{\lambda_{0
}}y\|<\varepsilon,
\end{align*}
what establishes the relative compactness of $T(\mathcal{F}_T)$ in $F$. \\
d) Finally we prove that $\mathcal{COA}_{r}(E,F)$ is an order ideal
in  $\mathcal{OA}_{r}(E,F)$. Let $0\leq R \leq T$, where
$R\in\mathcal{OA}_{r}(E,F)$ and $T\in\mathcal{COA}_{r}(E,F)$. Then
$R\in I_T$ and by Proposition~\ref{Fr} there\footnote{\, This proposition can be 
applied since by Proposition~\ref{thm:PK} the vector lattice $\mathcal{OA}_r(E,F)$ is
Dedekind complete.
Any $T$-step-function $S\in \mathcal{OA}_r(E,F)$ has the form
$S=\sum\limits_{i=1}^m\la_i T_i$, where $T_i$ are disjoint fragments
of $T$ such that $T= \bigsqcup\limits_{i=1}^m T_i$.}
exists a sequence $(S_n)_{n\in \N}$ in $\mathcal{OA}_{r}(E,F)$ of $T$-step-functions 
with $0\leq S_n\uparrow R$. 
Taking into account that\footnote{\, This
follows from the fact that together with $T$ each fragment $T_i$ of
$T$ belongs to $\mathcal{COA}_r(E,F)$.}
$S_n\in\mathcal{COA}_{r}(E,F)$, $n\in\N$ and what has been
established in c) we deduce that $R\in\mathcal{COA}_{r}(E,F)$.
So, $\mathcal{COA}_{r}(E,F)$ is a band in $\mathcal{OA}_{r}(E,F)$. \\
e) Due to the Dedekind completeness of $\mathcal{OA}_{r}(E,F)$ it is
a projection band.
\end{proof}
\section{$C$-compact and narrow orthogonally additive operators}\label{sec3}

In this section we consider a new class of vector lattices,
where the condition of  Dedekind completeness is replaced by a much
weaker property. For laterally-to-norm continuous, 
$C$-compact orthogonally additive operators from a $C$-complete vector lattice $E$ 
to a Banach space $X$ we show their narrowness.

\begin{definition} \label{def:c-compact}
A vector lattice $E$ is said to be { \it
$C$-complete}, if for each  $x\in E_{+}$ any subset $D\subset\mathcal{F}_{x}$
has a supremum and an infimum.
\end{definition}

Clearly, every Dedekind complete vector lattice $E$ is
$C$-complete. The reverse statement, in general, is not true.
\begin{prop}\label{contf}
The vector lattice $E=C[0,1]$ of all continuous functions on the
interval $[0,1]$ is $C$-complete.
\end{prop}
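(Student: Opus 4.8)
The plan is to describe $\mathcal{F}_x$ completely for a given $x\in C[0,1]_{+}$ and then to check that suprema and infima of arbitrary families of fragments of $x$ are computed pointwise. Put $U:=\{t\in[0,1]:x(t)>0\}$, an open subset of $[0,1]$, so that $U$ is a disjoint union of its connected components, each of which is open in $[0,1]$. First I would prove that the fragments of $x$ are precisely the functions $x\,\eins_{V}$ with $V$ a union of some of these components. Indeed, if $y\sqsubseteq x$ then $|y|\wedge|x-y|=0$; since the lattice operations of $C[0,1]$ are pointwise, this says $\min\{|y(t)|,|x(t)-y(t)|\}=0$, hence $y(t)\in\{0,x(t)\}$ for every $t$, and in particular $0\le y\le x$. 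On $U$ the sets $\{y=x\}\cap U$ and $\{y=0\}\cap U$ are relatively closed, disjoint, and cover $U$, so each is relatively clopen in $U$ and hence a union of components of $U$; thus $y=x\,\eins_{V}$ with $V:=\{y=x\}\cap U$. Conversely, for any union $V$ of components of $U$ the function $x\,\eins_{V}$ is continuous: on the open set $V$ it agrees with $x$; on $[0,1]\setminus\overline{V}$ it vanishes on a neighbourhood of each point; and at a point $t_{0}\in\overline{V}\setminus V$ one necessarily has $t_{0}\notin U$, so $x(t_{0})=0$, whence $0\le x\,\eins_{V}\le x$ forces $x\,\eins_{V}(t)\to 0$ as $t\to t_{0}$. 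Since also $x\,\eins_{V}\perp x\,\eins_{U\setminus V}=x-x\,\eins_{V}$, such an $x\,\eins_{V}$ is a fragment of $x$.

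With this description in hand, let $D\subseteq\mathcal{F}_x$ be nonempty and write its elements as $x\,\eins_{V_{\alpha}}$, $\alpha\in A$, each $V_{\alpha}$ a union of components of $U$. Put $V:=\bigcup_{\alpha\in A}V_{\alpha}$ and $W:=\bigcap_{\alpha\in A}V_{\alpha}$; both are unions of components of $U$, so $z:=x\,\eins_{V}$ and $u:=x\,\eins_{W}$ belong to $\mathcal{F}_x\subseteq C[0,1]$. A pointwise inspection gives $z(t)=\sup_{\alpha}(x\,\eins_{V_{\alpha}})(t)$ and $u(t)=\inf_{\alpha}(x\,\eins_{V_{\alpha}})(t)$ for all $t$ (at each $t$ both sides equal $x(t)$ or $0$ according to whether $t$ belongs to some, resp.\ to every, $V_{\alpha}$). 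Hence $z$ is an upper bound of $D$, and any $g\in C[0,1]$ with $g\ge x\,\eins_{V_{\alpha}}$ for all $\alpha$ satisfies $g(t)\ge\sup_{\alpha}(x\,\eins_{V_{\alpha}})(t)=z(t)$ for every $t$, that is $g\ge z$; so $z=\sup D$ in $C[0,1]$. Symmetrically $u=\inf D$ in $C[0,1]$. (For $D=\varnothing$ one takes $\sup D=0$ and $\inf D=x$, which lie in $\mathcal{F}_x$.) Thus every subset of $\mathcal{F}_x$ has a supremum and an infimum, i.e.\ $C[0,1]$ is $C$-complete.

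I expect the only genuinely delicate point to be the continuity of $x\,\eins_{V}$ when the components of $U$ accumulate — for instance when $\operatorname{supp}x$ has infinitely many components clustering at a zero of $x$. This is exactly where the squeeze $0\le x\,\eins_{V}\le x$, together with the continuity of $x$ and the vanishing of $x$ on $\overline{V}\setminus V$, is needed. Everything else is the routine — and here legitimate — reduction of the lattice operations of $C[0,1]$ to pointwise operations, valid precisely because the pointwise suprema and infima that occur turn out to be continuous functions.
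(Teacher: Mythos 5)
Your proof is correct and follows essentially the same route as the paper's: identify the fragments of $x$ with unions of connected components of the open set $\{x>0\}$ via a connectedness argument, then take suprema and infima by forming unions and intersections of the corresponding sets of components. The only difference is that you spell out the continuity of $x\,\eins_{V}$ at accumulation points of components (via the squeeze $0\le x\,\eins_V\le x$) and the identification of the pointwise supremum with the lattice supremum, two points the paper's proof leaves implicit.
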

\begin{proof}
Fix $f\in E_{+}$. Since the set
$\mathcal{S}_{f}:=\{t\in[0,1]:\,f(t)>0\}$ is open, by {\cite[Chap.2,
Theorem~5]{KF}} there is the decomposition
$\mathcal{S}_{f}=\bigcup\limits_{i=1}^{\infty}(a_i, b_i)$ where
$(a_i, b_i)\cap (a_j, b_j)=\emptyset$, $i\neq j$. Take
$g\in\mathcal{F}_{f}$.
We claim that for every  $i\in\Bbb{N}$ either
$g(t)=0$ or $g(t)=f(t)$ for any $t\in(a_i, b_i)$. Indeed, denote
by $f_{i}$ and $g_i$ the restrictions of $f$ and $g$ on
the closed interval $[a_i, b_i]$, respectively. It is clear that
$g_{i}\in\mathcal{F}_{f_i}$.  Assume that  there exists a nonzero
fragment $v_{i}\in\mathcal{F}_{f_i}$ such that $v_i\perp g_i$ and
$f_i=g_i+v_i$. Put $G_i:=\{t\in(a_i, b_i):\, g(t)>0\}$ and
$V_i=\{t\in(a_i, b_i):\, v(t)>0\}$. Since $f_{i}$ is strictly
positive on $(a_i,b_i)$ we deduce that $(a_i,b_i)=G_i\sqcup V_i$ and
therefore $G_i$ and $V_i$ are open-closed subsets of $(a_i,b_i)$.
But the interval $[a_i, b_i]$ is a connected set and we come to the
contradiction. Thus $f_{i}$ has no fragment $0<v_i<f_i$ and
therefore, either $f_i(t)=g_i(t)$ or $g_i(t)=0$ for any $t\in(a_i,
b_i)$. With each $g\in\mathcal{F}_{f}$ there is associated the sequence
$(g_i)_{i\in\Bbb{N}}$, where
$$
g_i=\begin{cases} 1,\quad \text{if} \; g(t)=f(t) \\
0,\quad \text{if} \; g(t)=0  \\
\end{cases} \quad t\in(a_i, b_i).
$$
Clearly, in this way a one-to-one correspondence between $\mathcal{F}_{f}$
and the set of all $0$\,-\,$1$ sequences is established.
Let $D$ be any fixed subset of $\mathcal{F}_{f}$. Put
\begin{gather*}
D_{+}:=\{i\in\N:\,\exists\, g\in D\;\text{such
that}\;g_i=1\};\\
D_{-}:=\{i\in\N:\; \text{such that} \, g_i=1 \; \text{for}\; \forall\,  g\in D\}.
\end{gather*}
Consider the pair of sequences  $(u_i)_{i\in\Bbb{N}}$ and
$(v_i)_{i\in\Bbb{N}}$, where
$$
u_i=\begin{cases}
1,\quad \text{if} \; i\in D_{+}\\
0,\quad \text{otherwise}, \\
\end{cases}
$$
$$
v_i=\begin{cases}
1,\quad \text{if} \; i\in D_{-}\\
0,\quad \text{otherwise}.\\
\end{cases}
$$
With $(u_i)_{i\in\Bbb{N}}$ and $(v_i)_{i\in\Bbb{N}}$ there is associated
the pair $\{u,v\}$ of fragments of $f$. Clearly $u=\sup D$ and
$v=\inf D$.
\end{proof}
Since the vector lattice $C[0,1]$ is Archimedean but not Dedekind complete the previous proposition  
shows that the set of all $C$-complete vector lattices is a new subclass of vector lattices which 
strictly contains the class of all Dedekind complete vector lattices. 
We note that in general a $C$-complete vector lattice is not Archimedian.
\begin{example}
Let $E=\Bbb{R}^2$ equipped with the lexicographic order. That is, we consider
$E$ as a vector lattice with the following order\footnote{\, The
cone $E_+$ in this vector lattice consists of the open right half-space $\{(x_1,x_2)\colon x_1>0\}$ joint with the half-ray
$\{(x_1,x_2)\colon x_1=0,\, x_2\geq 0\}$.}
$(x_1,x_2)\geq(y_1,y_2)$, whenever either $x_1>y_1$ or else $x_1=y_1$ and $x_2\geq y_2$.
The vector lattice $E$ is not Archimedian. On the other hand
it is not hard to verify  that the Boolean algebra of all fragments $\mathcal{F}_{x}$ of an arbitrary  
element $x=(x_1,x_2)\in E_{+}$ contains only two elements:
$$
\mathcal{F}_{x}=\{(x_1,x_2),\,(0,0)\}.
$$
Hence $E$ is $C$-complete.
\end{example}

 We say that a set $D\subset E$ is {\it laterally bounded}, if
there exits $x\in E$ such that $D\subset\mathcal{F}_{x}$. We say
that a laterally bounded set $D$ has a lateral supremum (infimum) if
there exists $u\in E$ ($v\in E$) such that $u=\sup D$
($v=\inf D$) with respect to the  partial order $\sqsubseteq$ in $\mathcal{F}_{x}$.
Taking into account Proposition~\ref{OPR}  we deduce that a vector
lattice $E$ is $C$-complete if and only if every laterally bounded
subset of $E$ has the lateral supremum and infimum.

\begin{definition} \label{def:ddmjf0}
Let $E$ be a vector lattice and $X$ be a normed space. An
orthogonally additive operator $T:E\to X$ is called {\it
narrow}, if for any $x\in E$ and  $\varepsilon > 0$ there exists a
pair $x_1, x_2$ of mutually complemented fragments of $v$, such that
$\|Tx_1-Tx_2\|<\varepsilon$. In particular, if $X=\Bbb{R}$, we call $T$ a
narrow functional.
\end{definition}
Observe that the image of an atom under a narrow operator $T$ is
zero. Indeed. The only disjoint fragments of an atom $a$ are $0$ and
$a$. So, due to the narrowness of $T$, for any $\eps>0$ one has $\nl
Tu\nr <\eps$, what means $Tu=0$.
\\
This is the reason for supposing the vector lattice $E$ to be atomless
in the Theorems \ref{thm-2} and in most of the
propositions of the current section.
\begin{example}
Let $(\Omega,\Sigma,\mu)$ be a $\sigma$-finite  measure space.
Consider a map $\mathcal{N}:L_{1}(\mu)\to \R$ defined by
$$
\mathcal{N}(f)=\| f\|_{L_{1}(\mu)},\quad f\in L_{1}(\mu).
$$
In \cite{PPW} (Proposition 2.5) it was shown that $\mathcal{N}$ is a narrow 
orthogonally additive functional on $L_{1}(\mu)$. 
\hide{
Indeed, since the norm in $L_1(\mu)$ is additive the equality
$$
\|f+g\|_{L_{1}(\mu)}=\nl
|f+g|\nr_{L_{1}(\mu)}=\||f|\|_{L_{1}(\mu)}+\||g|\|_{L_{1}(\mu)}=
\|f\|_{L_{1}(\mu)}+\|g\|_{L_{1}(\mu)}
$$
holds for any $f,g\in L_{1}(\mu)_+$ with $f\perp g$, we deduce that
$\mathcal{N}$ is an orthogonally additive functional. The narrowness
of the integral functional
$$
\mathcal{N}(f) = \|f\|_{L_{1}(\mu)}=\int_\Omega |f|\,d\mu,\quad f\in
L_{1}(\mu)
$$
is well known (see for instance \cite[Theorem~10.17]{PRan}).
}
\end{example}

A net $(x_\alpha)_{\alpha\in\Lambda}$ in a vector
lattice $E$ \textit{laterally converges} to $x \in E$ if $x_\alpha
\sqsubseteq x_\beta\sqsubseteq x$ for all $\alpha \leq\beta$
and $x_\alpha$ order converges to $x$. 
This is written as $x_\alpha \overset{\rm lat}\longrightarrow x$. 

\begin{definition}
An orthogonally
additive operator $T$ from a vector lattice $E$ to a normed space $X$ is called 
\textit{laterally-to-norm} continuous whenever for each laterally
convergent net $(x_\alpha)_{\alpha\in\Lambda}$ with $x_\alpha \overset{\rm
lat}\longrightarrow x$ the net $(Tx_\alpha)_{\alpha\in\Lambda}$
converges with respect to the norm to in $X$ to $Tx$.
\end{definition}

The following theorem is the second main result of the article.
\begin{thm} \label{thm-2}
Let $E$ be an atomless $C$-complete vector lattice and $X$ be
a Banach space. Then every orthogonally additive laterally-to-norm
continuous $C$-compact operator $T: E \to X$ is narrow.
\end{thm}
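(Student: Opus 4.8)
The plan is to fix $x\in E$ and $\varepsilon>0$, and to produce a pair of mutually complemented fragments $x_1\sqcup x_2=x$ with $\|Tx_1-Tx_2\|<\varepsilon$. Since $x=x^+\sqcup(-x^-)$ and a decomposition of $x^+$ together with one of $x^-$ yields one of $x$ (by Proposition~\ref{OPR}), I would first reduce to the case $x\in E_+$, treating $x^+$ and $x^-$ separately and splicing the two resulting pairs. So assume $x\geq 0$.

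The core idea is a bisection/exhaustion argument carried out inside the Boolean algebra $\mathcal{F}_x$, which is complete precisely because $E$ is $C$-complete. Since $E$ is atomless, every nonzero fragment $y\sqsubseteq x$ splits as $y=y'\sqcup y''$ with $y',y''\neq 0$; iterating, one builds for each $n$ a partition $x=\bigsqcup_{i=1}^{2^n}x_i^{(n)}$ into nonzero fragments refining the previous one. I would use $C$-compactness of $T$ at $x$: the set $T(\mathcal{F}_x)$ is totally bounded in $X$, so it has finite diameter, but more importantly I want to show the "oscillation" of $T$ over small fragments tends to $0$. Here laterally-to-norm continuity enters: along any decreasing chain of fragments $y_1\sqsupseteq y_2\sqsupseteq\cdots$ with infimum $y_\infty$ (which exists by $C$-completeness), one has $y_n\overset{\rm lat}\longrightarrow$ something—care is needed since lateral convergence as defined requires an \emph{increasing} chain, so I would instead apply laterality to the complementary fragments $x\ominus y_n$ increasing up to $x\ominus y_\infty$, giving $T(x\ominus y_n)\to T(x\ominus y_\infty)$ in norm, hence $Ty_n=Tx-T(x\ominus y_n)\to Ty_\infty$.

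The main step is then a "swapping" or "sliding hump" argument in the spirit of the proof of Pliev--Popov \cite[Theorem~3.2]{PP}: using total boundedness of $T(\mathcal{F}_x)$, cover it by finitely many balls of radius $\varepsilon/4$; among the $2^n$ pieces $x_i^{(n)}$ of a fine enough partition, a pigeonhole argument forces many of the partial sums $T(\bigsqcup_{i\in A}x_i^{(n)})$, as $A$ ranges over subsets, to cluster; one then chooses $A$ so that $\bigsqcup_{i\in A}x_i^{(n)}$ and its complement have $T$-images within $\varepsilon$ of each other. Concretely, order the pieces and look at the "path" $T(x_1^{(n)}),\,T(x_1^{(n)}\sqcup x_2^{(n)}),\,\dots,\,T(x)$ of consecutive partial fragments; consecutive terms differ by $Tx_i^{(n)}$ on the relevant piece, and I must argue these increments are uniformly small for $n$ large — this is where laterally-to-norm continuity plus $C$-completeness give that $\sup_i\|Tx_i^{(n)}\|$ (or rather the oscillation along the refining chain containing $x_i^{(n)}$) $\to 0$. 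Then a discrete intermediate-value argument along this path lands a partial fragment $x_1$ with $\|Tx_1-\tfrac12 Tx\|$ small, equivalently $\|Tx_1-T(x\ominus x_1)\|<\varepsilon$.

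The hard part will be establishing the uniform smallness of the pieces' images: $C$-compactness gives total boundedness of $T(\mathcal{F}_x)$ as a \emph{set}, but does not a priori control $\|Tx_i^{(n)}\|$ for the individual blocks of an arbitrary refining partition. I expect one must choose the refinement adaptively rather than by blind bisection — at stage $n$, using total boundedness, pick a finite $\varepsilon_n$-net of $T(\mathcal{F}_x)$, then refine the current partition so that each new block $y$ has $T(\mathcal{F}_y)$ of diameter $<\varepsilon_n$ (possible because laterally-to-norm continuity forces the diameter of $T(\mathcal{F}_y)$ along a shrinking chain of fragments with infimum $0$ — which exists since $E$ is atomless and $C$-complete — to go to $0$, and a compactness/exhaustion argument upgrades this to a finite partition with all blocks good). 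Once every block $y$ satisfies $\mathrm{diam}\,T(\mathcal{F}_y)<\varepsilon_n$, the increments along the partial-fragment path are $<\varepsilon_n$, and the intermediate-value step closes. Assembling these reductions carefully — reduction to $E_+$, construction of the adaptively refined partition via $C$-completeness, the oscillation estimate via laterally-to-norm continuity, and the final discrete path argument via $C$-compactness — completes the proof.
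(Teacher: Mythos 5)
Your overall architecture (split $x$ into many fragments with uniformly small images using $C$-completeness and laterally-to-norm continuity, then recombine them into two complementary halves with nearly equal images) is the right one, and your treatment of the decreasing-chain issue via complements matches Proposition~\ref{prop-decreas}. The first half of your plan is essentially Proposition~\ref{prop-dec-1} of the paper, although you should note that producing a \emph{finite} partition all of whose blocks are good is delicate: the paper needs a Zorn's lemma argument on maximal elements of $D_{x,T,\varepsilon}$ together with Proposition~\ref{prop-decreas} to show the splitting procedure terminates, and your ``compactness/exhaustion upgrade'' (and in particular the claim that $\mathrm{diam}\,T(\mathcal{F}_y)$ can be made small on every block, which is stronger than what is needed or proved) is not yet an argument.

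The genuine gap is the final recombination step. A ``discrete intermediate-value argument along the path'' $T(x_1^{(n)}),\,T(x_1^{(n)}\sqcup x_2^{(n)}),\dots,Tx$ only works when $X=\R$: in a Banach space of dimension $\geq 2$ a walk with small increments from $0$ to $Tx$ need not pass anywhere near $\tfrac12 Tx$, and your pigeonhole variant (many partial sums $T(y_A)$ cluster) does not produce a \emph{complementary} pair $A$, $A^c$ with close images --- that is exactly the statement being proved. What is actually needed is a Steinitz-type rounding lemma (Proposition~\ref{prop-rounding}): for $\lambda_i=\tfrac12$ one can choose signs $\theta_i\in\{0,1\}$ with $\bigl\|\sum_i(\tfrac12-\theta_i)Tx_i\bigr\|\leq\frac{\dim V}{2}\max_i\|Tx_i\|$, which solves the recombination but only with an error proportional to the \emph{dimension} of the range. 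Consequently one cannot work directly in $X$; this is precisely where $C$-compactness earns its keep in the paper's proof: $T(\mathcal{F}_x)$ is relatively compact, so after embedding $X\hookrightarrow \ell_\infty(B_{X^*})$ one approximates $T$ on $\mathcal{F}_x$ uniformly by a finite-rank operator $G=S\circ T$ (\cite[Lemma~10.25]{PRan}), proves narrowness of $G$ via the rounding lemma (Proposition~\ref{le-fin}), and transfers the estimate back to $T$ by the triangle inequality. Without the finite-rank reduction and the rounding lemma your argument does not close, so as written the proposal is incomplete at its decisive step.
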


 The next auxiliary proposition is well known (see e.g.
\cite[Lemma~10.20]{PRan}).
\begin{prop} \label{prop-rounding}
Let  $(v_{i})_{i=1}^{n}$ be a finite subset of elements in a finite
dimensional normed space  $V$ and $(\lambda_{i})_{i=1}^{n}$ be non-negative numbers 
such that $0\leq\lambda_{i}\leq 1$ for each $i$.
Then there exists a set $(\theta_{i})_{i=1}^{n}$ of numbers such that 
$\theta_{i}\in\{0,1\},  \; i\in \{1,\ldots,n\}$ and 
$$
\Big\|\sum_{i=1}^{n}(\lambda_{i}-\theta_{i}) \, v_{i}\Big\|\leq\frac{\text{\rm
dim} \, V}{2}\max\limits_{i\in \{1,\ldots,n\}}\|v_{i}\|.
$$
\end{prop}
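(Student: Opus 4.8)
The plan is to prove the estimate by an iterative ``un-rounding'' procedure based on linear dependence, followed by a crude coordinatewise rounding of the surviving fractional coefficients. Write $d=\dim V$ and fix the given data $(v_i)_{i=1}^n$ and $(\lambda_i)_{i=1}^n$. Throughout I would keep the value of $\sum_{i=1}^n\lambda_i v_i$ fixed, modifying only the coefficients; I call an index $i$ \emph{fractional} if $0<\lambda_i<1$ and collect all fractional indices in a set $F$.

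First I would show that, as long as $|F|>d$, one can strictly decrease the number of fractional indices without changing $\sum_{i=1}^n\lambda_i v_i$ and without leaving the cube $[0,1]^n$. Indeed, if $|F|>d$ then $\{v_i:i\in F\}$ consists of more than $\dim V$ vectors and is therefore linearly dependent, so one may choose real scalars $(\mu_i)_{i\in F}$, not all zero, with $\sum_{i\in F}\mu_i v_i=0$; set $\mu_i=0$ for $i\notin F$. For $t\in\R$ put $\lambda_i(t)=\lambda_i+t\mu_i$. Then $\sum_i\lambda_i(t)v_i=\sum_i\lambda_i v_i$ for every $t$, while the non-fractional coefficients are untouched. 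Since each fractional $\lambda_i$ lies strictly inside $(0,1)$ and some $\mu_i\neq0$, the feasibility set $\{t:\lambda_i(t)\in[0,1]\text{ for all }i\}$ is a bounded closed interval $[t_-,t_+]$ with $t_-<0<t_+$. Replacing $(\lambda_i)$ by $(\lambda_i(t_+))$ keeps all coefficients in $[0,1]$ and drives at least one previously fractional coefficient to $0$ or $1$, so $|F|$ drops by at least one.

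Iterating this step terminates after finitely many rounds and yields coefficients $(\lambda_i^\ast)$ with $\sum_i\lambda_i^\ast v_i=\sum_i\lambda_i v_i$, with a fractional set $F^\ast$ of size $|F^\ast|\le d$, and with $\lambda_i^\ast\in\{0,1\}$ for $i\notin F^\ast$. Now define $\theta_i=\lambda_i^\ast$ for $i\notin F^\ast$, and for $i\in F^\ast$ let $\theta_i\in\{0,1\}$ be a nearest integer to $\lambda_i^\ast$, so that $|\lambda_i^\ast-\theta_i|\le\tfrac12$. Because the un-rounding preserved the sum, we have $\sum_i(\lambda_i-\theta_i)v_i=\sum_i(\lambda_i^\ast-\theta_i)v_i$, and here only the indices in $F^\ast$ contribute. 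Hence
\[
\Big\|\sum_{i=1}^n(\lambda_i-\theta_i)v_i\Big\|
=\Big\|\sum_{i\in F^\ast}(\lambda_i^\ast-\theta_i)v_i\Big\|
\le\sum_{i\in F^\ast}|\lambda_i^\ast-\theta_i|\,\|v_i\|
\le\frac{|F^\ast|}{2}\max_{1\le i\le n}\|v_i\|
\le\frac{d}{2}\max_{1\le i\le n}\|v_i\|,
\]
which is the asserted estimate.

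The main point to get right is the reduction step: one must verify that moving along the kernel direction $(\mu_i)$ can be stopped exactly when a coordinate first reaches $\{0,1\}$, so that feasibility in $[0,1]^n$ is retained while the count of fractional coordinates strictly decreases; this is precisely what guarantees termination with at most $d$ fractional coordinates. Everything after that is the elementary observation that each surviving fractional coefficient can be rounded with absolute error at most $\tfrac12$, together with the fact that the preserved value of the sum lets one replace $(\lambda_i)$ by $(\lambda_i^\ast)$ in the final estimate at no cost.
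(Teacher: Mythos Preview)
Your argument is correct. The paper does not give its own proof of this proposition; it records it as well known and cites \cite[Lemma~10.20]{PRan}. What you wrote is in fact the standard proof of this rounding lemma: use linear dependence to push coefficients to the boundary of $[0,1]^n$ while preserving $\sum_i\lambda_i v_i$, until at most $d=\dim V$ fractional coordinates remain, and then round each of those with error at most $\tfrac12$. The only step that requires care---that the feasibility set in $t$ is a compact interval containing $0$ in its interior and that at an endpoint some fractional coordinate becomes integral---you handle correctly, so termination with $|F^\ast|\le d$ is guaranteed and the final triangle-inequality estimate gives the stated bound.
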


\begin{prop} \label{prop-dec}
Let be $E$ an atomless  vector lattice, $x\in E$, $X$ a Banach
space and  $T:E \to X$ an orthogonally additive laterally-to-norm
continuous operator. Then for any $\varepsilon > 0$ there exists a
decomposition $x=y\sqcup z$, where $y,z$ are nonzero fragments of
$x$ such that $\|Tz\|<\varepsilon$.
\end{prop}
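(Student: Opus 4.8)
The plan is to produce a nonzero proper fragment of $x$ of small $T$-norm by realising it as a ``jump'' inside a laterally convergent net of fragments, arguing by contradiction.

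First I would reduce to the case $x>0$: we may assume $x\neq 0$, and since $x^{+}$ and $-x^{-}$ are fragments of $x$ with $x=x^{+}\sqcup(-x^{-})$ (cf.\ Proposition~\ref{OPR}) while both $T$ and $u\mapsto T(-u)$ are orthogonally additive and laterally-to-norm continuous, it suffices to treat a strictly positive $x$. So suppose $x>0$ and, towards a contradiction, that $\|Tz\|\geq\varepsilon$ for every fragment $z$ of $x$ with $0\neq z\neq x$.

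Next I would fix, by Zorn's lemma, a maximal chain $\mathcal{C}$ in the partially ordered set $(\mathcal{F}_{x},\sqsubseteq)$; maximality forces $0,x\in\mathcal{C}$. Atomlessness makes $\mathcal{C}$ jump-free: if $c\sqsubset c'$ were consecutive in $\mathcal{C}$, then $c'-c$ would be a nonzero fragment of $x$ admitting no proper nonzero fragment, which atomlessness excludes; hence between any two distinct members of $\mathcal{C}$ there lies a third. I would also record that $\mathcal{C}$ is closed under order limits of its monotone subnets: an order limit in $E$ of an increasing net of fragments of $x$ is again a fragment of $x$ (this uses only the distributive law $u\wedge\sup_{\alpha}v_{\alpha}=\sup_{\alpha}(u\wedge v_{\alpha})$, valid in every vector lattice), and such a limit, being $\sqsubseteq$-comparable with every element of $\mathcal{C}$, belongs to $\mathcal{C}$. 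From these facts one derives that the increasing net $(c)_{c\in\mathcal{C},\,c\sqsubset x}$ laterally converges to $x$. Then laterally-to-norm continuity gives $Tc\to Tx$ in norm along this net, so $(Tc)_{c\in\mathcal{C}}$ is norm-Cauchy (here completeness of $X$ enters); hence there are $c\sqsubset c'$ in $\mathcal{C}$ with $c\neq 0$, $c'\neq x$ and $\|Tc'-Tc\|<\varepsilon$. Putting $z:=c'-c$ and $y:=x-z$, we get a fragment $z$ of $x$ with $0\neq z\neq x$ (so $y\neq 0$), with $x=y\sqcup z$ and, by orthogonal additivity, $Tz=Tc'-Tc$, so $\|Tz\|<\varepsilon$; this contradicts the assumption and proves the statement.

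The step I expect to be the real obstacle is the one I glossed over: verifying that the net $(c)_{c\in\mathcal{C},\,c\sqsubset x}$ laterally converges to $x$, i.e.\ that $x-c\downarrow 0$ in $E$ along $\mathcal{C}$. This is the point where the order structure of $E$ must genuinely be used — the interplay of the Boolean-algebra structure of $\mathcal{F}_{x}$, order convergence in $E$, and the distributive law — in contrast to the reduction and the concluding Cauchy argument, which are essentially formal.
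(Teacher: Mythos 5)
Your route (maximal chain in $(\mathcal{F}_x,\sqsubseteq)$ plus a norm-Cauchy argument) is genuinely different from the paper's, which instead takes the net of \emph{all} fragments of $x$ indexed by $(\mathcal{F}_x,\sqsubseteq)$ itself and applies laterally-to-norm continuity to it. But there is a decisive gap at the point where you invoke atomlessness. Atomlessness of $E$ does \emph{not} imply that every nonzero fragment of $x$ has a nonzero proper fragment, i.e.\ it does not make the Boolean algebra $\mathcal{F}_x$ atomless: the definition of atom concerns disjoint elements \emph{below} $a$, not components of $a$. The paper's own Proposition~\ref{contf} supplies the counterexample: $C[0,1]$ is atomless, yet the connectedness argument there shows $\mathcal{F}_{\mathbf{1}}=\{0,\mathbf{1}\}$. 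For $x=\mathbf{1}$ your maximal chain is $\{0,x\}$, it has a jump, and $c'-c=x$ is a Boolean atom of $\mathcal{F}_x$ without being an atom of $E$; the construction then yields nothing. (This is not a defect peculiar to your write-up: the paper's proof makes the same silent leap when it asserts that atomlessness forces $\mathcal{F}_x$ to be infinite, and indeed the statement itself fails for $E=C[0,1]$, $x=\mathbf{1}$, $T=0$, since no decomposition of $\mathbf{1}$ into two nonzero fragments exists. The hypothesis actually needed is atomlessness of the Boolean algebras $\mathcal{F}_x$.)

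The second problem is the one you flagged yourself, and it is not merely technical. Even granting jump-freeness, lateral convergence of $(c)_{c\in\mathcal{C},\,c\sqsubset x}$ to $x$ requires (i) that $\sup\{c\in\mathcal{C}:c\sqsubset x\}$ exist in $E$ --- no completeness hypothesis is available in this proposition --- and (ii) that this supremum equal $x$; your closure-under-limits argument for (ii) itself presupposes (i). Neither is supplied, so the Cauchy step has nothing to stand on. The paper sidesteps this by using the net of all fragments, which laterally converges to $x$ for the trivial reason that $x$ is the greatest index (the net is eventually constant) --- at the price that the resulting $\alpha_0$ may be $x$ itself, so that proof, too, does not actually secure $z\neq 0$. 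In short, your outline becomes a proof only under the stronger hypotheses that each $\mathcal{F}_x$ is an atomless Boolean algebra and that chains of fragments have suprema (as in $C$-completeness); as written, both key steps fail.
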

\begin{proof}
Since $E$  is an atomless vector lattice the Boolean algebra
$\mathfrak{A}:=\mathcal{F}_{x}$ has infinite cardinality.
We note that the
set of all fragments of $x$ is the net $(x_\alpha)_{\alpha\in\mathfrak{A}}$
laterally converges to $x$.
The
laterally-to-norm continuity of $T$ implies the existence of
$\alpha_0\in\mathfrak{A}$ such that $\|Tx-Tx_{\alpha}\|<\varepsilon$
for all $\alpha\geq\alpha_{0}$. Consider for $\alpha_0$ the disjoint decomposition
$x=(x-x_{\alpha_0})\sqcup x_{\alpha_0}$. Then
\[Tx=T\big((x-x_{\alpha_0})\sqcup
x_{\alpha_0}\big)=T(x-x_{\alpha_0})+Tx_{\alpha}\quad\text{implies}\quad
\|T(x-x_{\alpha_0})\|<\varepsilon.
\]
 Assign $y:=x_{\alpha_0}$ and $z=x-x_{\alpha_0}$. Then $\|Tz\|<\varepsilon$
 and $x=y\sqcup z$ is the desirable disjoint decomposition.
\end{proof}

\begin{prop} \label{prop-decreas}
Let be $E$ an atomless vector lattice, $x\in E$, $X$ a Banach space and   
$T\colon E \to X$ an orthogonally additive laterally-to-norm continuous
operator.  
Assume that $(y_n)_{n\in \N}$ is a sequence of fragments of $x$ such that 
$y_1=x, \; y_n\sqsubseteq y_m$ for $n\geq m$; \,$m,n\in\N$ and
$\bigcap\limits_{n\in\N}\mathcal{F}_{y_n}=\{0\}$.  
Then $\lim\limits_{n\to\infty} \|T y_n\| = 0$.
\end{prop}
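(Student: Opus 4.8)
\medskip
\noindent\emph{Proof strategy.} The plan is to pass to the complements $z_n:=x-y_n$, to recognise $(z_n)$ as an increasing (for $\sqsubseteq$) sequence of fragments of $x$ that laterally converges to $x$, and then to read off the conclusion from the laterally-to-norm continuity of $T$ together with orthogonal additivity.

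First I would dispose of the trivial case: if $y_N=0$ for some $N$, then $y_n\sqsubseteq y_N=0$ forces $y_n=0$, hence $Ty_n=0$, for all $n\ge N$, and there is nothing left to prove; so assume $y_n\neq 0$ for every $n$. It is convenient to reduce to $x\in E_{+}$. Writing $x=x^{+}\sqcup(-x^{-})$ and applying Proposition~\ref{OPR}, the positive parts $(y_n^{+})$ and the negative parts $(y_n^{-})$ form decreasing sequences of fragments of $x^{+}$ and of $x^{-}$ with $\bigcap_{n}\mathcal F_{y_n^{+}}=\{0\}$ and $\bigcap_{n}\mathcal F_{y_n^{-}}=\{0\}$; since $Ty_n=Ty_n^{+}+T(-y_n^{-})$, $\|Ty_n\|\le\|Ty_n^{+}\|+\|T(-y_n^{-})\|$, and $z\mapsto T(-z)$ is again an orthogonally additive, laterally-to-norm continuous operator, it suffices to prove the statement for $x\in E_{+}$. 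In that case $0\le y_{n+1}\le y_n$ for all $n$.

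Now set $z_n:=x-y_n$. Since $y_n\sqsubseteq y_m$ for $n\ge m$, decomposing $x=y_n\sqcup(y_m-y_n)\sqcup(x-y_m)$ gives $x-y_m\sqsubseteq x-y_n$, so $(z_n)_{n\in\N}$ is an increasing sequence in $(\mathcal F_x,\sqsubseteq)$ with $z_n\sqsubseteq x$. Moreover $x$ is the lateral supremum of $\{z_n:n\in\N\}$: if $w\sqsubseteq x$ satisfies $z_n\sqsubseteq w$ for every $n$, then $x-w\sqsubseteq x-z_n=y_n$ for every $n$, hence $x-w\in\bigcap_{n}\mathcal F_{y_n}=\{0\}$ and $w=x$. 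The technical heart of the argument is to upgrade this to genuine lateral convergence $z_n\overset{\mathrm{lat}}\longrightarrow x$, that is, to prove $z_n\overset{o}\longrightarrow x$, equivalently $y_n\downarrow 0$ in $E$. This is exactly where the hypothesis $\bigcap_{n}\mathcal F_{y_n}=\{0\}$ is used: if $0\le u\le y_n$ for all $n$, one notes $u\perp v_k:=y_k-y_{k+1}$ for every $k$ (because $0\le u\le y_{k+1}\perp v_k$), and then uses these disjointness relations together with $y_1-y_n=\bigsqcup_{k=1}^{n-1}v_k$ and $u\le y_1=x$ to force $u$ to be a fragment of each $y_n$, whence $u\in\bigcap_{n}\mathcal F_{y_n}=\{0\}$; thus the only positive lower bound of $(y_n)$ is $0$ and $y_n\downarrow 0$.

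Once $z_n\overset{\mathrm{lat}}\longrightarrow x$ is established, the laterally-to-norm continuity of $T$ yields $\|Tz_n-Tx\|\to 0$. Since $x=z_n\sqcup y_n$ is a disjoint decomposition (because $y_n\sqsubseteq x$), orthogonal additivity gives $Tx=Tz_n+Ty_n$, and therefore $\|Ty_n\|=\|Tx-Tz_n\|\to 0$, as required. The step I expect to be the main obstacle is precisely the claim $y_n\downarrow 0$: the inequality $0\le u\le y_n$ does not by itself make $u$ a fragment of $x$, so the disjointness relations $u\perp v_k$ have to be exploited with care, and this is the point at which the assumption $\bigcap_{n}\mathcal F_{y_n}=\{0\}$ is indispensable.
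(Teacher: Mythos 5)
Your overall route coincides with the paper's: pass to $z_n=x-y_n$, show $z_n\overset{\rm lat}\longrightarrow x$, then use $Tx=Tz_n+Ty_n$ and laterally-to-norm continuity. The reduction to $x\in E_+$ and the computation that $(z_n)$ is laterally increasing with lateral supremum $x$ are fine. The gap is exactly at the step you flag as the technical heart, and your proposed fix does not close it. From $0\le u\le y_n$ for all $n$ and $u\perp v_k:=y_k-y_{k+1}$ for all $k$ you cannot conclude that $u$ is a fragment of each $y_n$: being a fragment means $u\perp(y_n-u)$, and the elements $v_k$ decompose $x-y_n$, not $y_n-u$, so the relations $u\perp v_k$ together with $u\le x$ only re-derive $u\le y_n$ (note that $y_n\equiv y_1$ and $u=\tfrac12 y_1$ satisfy every disjointness relation you list without $u$ being a fragment). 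Domination $0\le u\le y_n$ is in general far weaker than $u\sqsubseteq y_n$, so the hypothesis $\bigcap_n\mathcal{F}_{y_n}=\{0\}$ cannot be applied to $u$. What is actually needed is to manufacture from a nonzero common lower bound a nonzero common \emph{fragment}; if $E$ had the principal projection property one could take $w=P_ux$, the band projection of $x$ onto the band generated by $u$, observe $P_uv_k=0$ and hence $w=P_uy_n\sqsubseteq y_n$ with $w\ge u>0$, contradicting the hypothesis — but no such projection exists in an arbitrary vector lattice.

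Moreover the intermediate claim $y_n\downarrow 0$ is genuinely false at this level of generality, not merely unproven. Take $E=\{(f_k)_{k\ge 0}:\,f_k\in C[0,1],\ \lim_k f_k(0)=f_0(0)\}$, an atomless Archimedean sublattice of $\prod_{k\ge0}C[0,1]$, let $x=(\mathbf{1},\mathbf{1},\mathbf{1},\dots)$ and let $y_n$ be $x$ with coordinates $1,\dots,n-1$ replaced by $0$. Then $y_1=x$, $y_n\sqsubseteq y_m$ for $n\ge m$, and $\bigcap_n\mathcal{F}_{y_n}=\{0\}$ because the only candidate common fragment, $(\mathbf{1},0,0,\dots)$, violates the defining condition of $E$; yet $u=(t,0,0,\dots)$ is a nonzero lower bound of all $y_n$, so $y_n\not\downarrow 0$ and $(z_n)$ does not order converge to $x$. (The linear functional $f\mapsto f_0(1)$ is laterally-to-norm continuous on this $E$ and sends every $y_n$ to $1$, so the proposition itself needs an extra hypothesis.) To be fair, the paper disposes of this very step in a one-line footnote whose implication runs in the converse direction, so you have correctly located the weak point of the argument; the clean repair is to assume $E$ is $C$-complete — which holds wherever Proposition~\ref{prop-decreas} is invoked, namely in Proposition~\ref{prop-dec-1} — for then $s=\sup_n z_n$ exists as a lateral supremum in $\mathcal{F}_x$, $x-s$ is a common fragment of all $y_n$, hence $x-s=0$ and $z_n\uparrow x$ gives the required order convergence.
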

\begin{proof}
By definition $y_m=y_n\sqcup(y_m-y_n)$ for  $m,n\in\N$ with  $n\geq m$.
For $x_n:=x-y_n$, $n\in\N$ with $y_n=y_m - (y_m-y_n)$ we can write
\begin{gather*}
x_{n}=x-y_m+(y_m-y_n)= x-y_m+((x-y_n)-(x-y_m))=x_{m}+(x_n-x_m).
\end{gather*}
Clearly $x_{m}\perp(x_n-x_m)$ for $n,m\in\Bbb{N}$ and $n\geq m$ and
therefore $x_n\perp(x-x_n)$ for all $n\in \N$. Thus  $x=x_n \sqcup
(x-x_n),\,n\in\Bbb{N}$ and $\bigcap\limits_{n\in\N}\mathcal{F}_{x-x_n} =\bigcap\limits_{n\in\N}\mathcal{F}_{y_n}=\{0\}$.
Hence we deduce that the sequence $(x_n)_{n\in \N}$ laterally
 converges\footnote{\, Indeed, if some vector $0\neq u$ is a fragment of each
vector $x-x_n$ then $u\sqsubseteq(x-x_n)$
 and $|x-x_n|\leq u_n$ for some sequence $u_n\downarrow 0$ is impossible.} to
$x$.
Now the relation $x_{n}\perp(x-x_n)\,\,\text{for all}\,\,n\in\Bbb{N}$ implies
\begin{gather*}
Tx=T\big(x_n\sqcup(x-x_{n})\big)=
Tx_n+T(x-x_n)\,\,\text{and}\,\,T(x-x_n)=Tx-Tx_n.
\end{gather*}
 Thus by laterally-to-norm continuity of $T$ we have
that $Tx_n$ is norm convergent to $Tx$ in X and
$$
\lim\limits_{n\to\infty} \|Tx-Tx_n\|=\lim\limits_{n\to\infty} \|T y_n\|=0.
$$
\end{proof}

\begin{prop} \label{prop-dec-1}
Let be $E$  an atomless  $C$-complete vector lattice, $X$ a Banach
space, $T: E \to X$ an orthogonally additive laterally-to-norm
continuous operator, $x\in E$ and $\varepsilon > 0$. Then for some
$n\in \N$ there exists a decomposition
$x=\bigsqcup\limits_{i=1}^{n}x_i$, where $x_i$
 are nonzero fragments of $x$
such that  $\|Tx_{i}\|\leq\varepsilon$  for any $i\in
\{1,\ldots,n\}$.
\end{prop}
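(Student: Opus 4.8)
The plan is to iterate Proposition~\ref{prop-dec}, repeatedly splitting off a ``small'' piece, and to use $C$-completeness together with Proposition~\ref{prop-decreas} to guarantee that the process terminates after finitely many steps. First I would construct, by a greedy/transfinite argument, a maximal disjoint family of fragments of $x$ on each of which $T$ has norm at most $\varepsilon$. Concretely, set $y_1 = x$; if $\|Ty_1\| \le \varepsilon$ we are done with $n=1$, otherwise by Proposition~\ref{prop-dec} write $y_1 = w_1 \sqcup y_2$ with $w_1, y_2$ nonzero fragments and $\|Tw_1\| < \varepsilon$ (after possibly relabelling so that the small piece is $w_1$). Repeating, as long as $\|Ty_k\| > \varepsilon$ we obtain nonzero fragments $w_k$ of $x$, pairwise disjoint, with $\|Tw_k\| < \varepsilon$, and a decreasing chain $y_1 \sqsupseteq y_2 \sqsupseteq \cdots$ with $y_k = w_k \sqcup y_{k+1}$.

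Next I would argue that this chain cannot be infinite. Suppose it is. Since $E$ is $C$-complete, the set $\{y_k : k \in \N\} \subseteq \mathcal{F}_x$ has an infimum $y_\infty$ in $\mathcal{F}_x$; replacing each $y_k$ by $y_k - y_\infty$ (still a decreasing chain of fragments of $x$, with $y_k = w_k \sqcup (y_{k+1}-y_\infty) \sqcup$ a piece inside $\mathcal{F}_{y_\infty}$, and one checks $w_k \sqsubseteq y_k - y_\infty$ after this adjustment) we may assume $\bigwedge_k y_k = 0$, i.e. $\bigcap_{k\in\N}\mathcal{F}_{y_k} = \{0\}$. Here I would invoke Proposition~\ref{OPR} and the characterization of $C$-completeness via lateral suprema/infima to see that $\bigcap_k \mathcal{F}_{y_k} = \{0\}$ is exactly the statement that the lateral infimum of the $y_k$ is $0$. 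Now Proposition~\ref{prop-decreas} applies verbatim (the hypotheses $y_1 = x$ after the first trivial split, $y_n \sqsubseteq y_m$ for $n \ge m$, and $\bigcap_n \mathcal{F}_{y_n} = \{0\}$ all hold) and yields $\|Ty_k\| \to 0$. But this contradicts $\|Ty_k\| > \varepsilon$ for all $k$. Hence the chain terminates: there is some $N$ with $\|Ty_N\| \le \varepsilon$.

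Finally, I would assemble the decomposition. At termination we have $x = w_1 \sqcup w_2 \sqcup \cdots \sqcup w_{N-1} \sqcup y_N$, a disjoint decomposition into nonzero fragments of $x$, with $\|Tw_i\| < \varepsilon$ for $i < N$ and $\|Ty_N\| \le \varepsilon$; relabelling $x_i := w_i$ for $i < N$ and $x_N := y_N$ gives $x = \bigsqcup_{i=1}^{N} x_i$ with $\|Tx_i\| \le \varepsilon$ for all $i$, as required. (If $\|Tx\| \le \varepsilon$ at the outset, take $n=1$, $x_1 = x$.)

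The main obstacle is the termination argument in the second paragraph: one must make precise the reduction to the case $\bigwedge_k y_k = 0$ so that Proposition~\ref{prop-decreas} can be invoked, and this requires carefully tracking that after subtracting the lateral infimum the pieces $w_k$ remain fragments of the new $y_k$'s and remain nonzero — the latter is where one genuinely uses that the original splits produced \emph{nonzero} small pieces, and where a subtlety could hide if $C$-completeness were replaced by something weaker. Everything else (the greedy construction, the final reassembly) is routine bookkeeping with disjoint decompositions.
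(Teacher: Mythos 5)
Your overall strategy (iterate Proposition~\ref{prop-dec} to peel off small fragments, then use Proposition~\ref{prop-decreas} to force termination) is the same as the paper's, but your termination argument has a genuine gap. Suppose the greedy chain $y_1\sqsupseteq y_2\sqsupseteq\cdots$ is infinite with $\|Ty_k\|>\varepsilon$ for all $k$, let $y_\infty$ be the lateral infimum, and set $y_k'=y_k-y_\infty$. Proposition~\ref{prop-decreas} then gives $\|Ty_k'\|\to 0$, \emph{not} $\|Ty_k\|\to 0$. Since $y_k=y_k'\sqcup y_\infty$, orthogonal additivity gives $Ty_k=Ty_k'+Ty_\infty\to Ty_\infty$, and if $\|Ty_\infty\|\geq\varepsilon$ there is no contradiction with $\|Ty_k\|>\varepsilon$ at all: the process can legitimately stall, shaving off smaller and smaller pieces while $y_k$ converges laterally down to a ``core'' $y_\infty$ on which $T$ is still large. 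Nothing in your construction prevents this, because Proposition~\ref{prop-dec} only guarantees \emph{some} nonzero small piece $w_k$, with no control on how much of $y_k$ it removes. (And an infinite chain cannot be salvaged by collecting infinitely many $w_k$'s, since the conclusion requires a \emph{finite} decomposition.)

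The missing idea — and the point where the paper genuinely uses $C$-completeness beyond the existence of the infimum $y_\infty$ — is to make each split \emph{maximal}: the paper introduces $D_{y,T,\varepsilon}=\{z\in\mathcal{F}_y:\ z\neq 0,\ \|Tz\|\leq\varepsilon\}$ and uses Zorn's Lemma (chains have lateral suprema by $C$-completeness, and these suprema stay in $D_{y,T,\varepsilon}$ by laterally-to-norm continuity) to choose at each step a \emph{maximal} small fragment. With that choice one can show directly that $\bigcap_k\mathcal{F}_{y_{2k}}=\{0\}$: if a nonzero $v$ survived in every $y_{2k}$, then Proposition~\ref{prop-decreas} applied to $y_{2k}-v$ would eventually produce a small fragment of $y_{2k}$ strictly larger than (or incomparable with, leading to absorption of part of $v$ into) the chosen maximal one, which a short case analysis rules out. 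Then Proposition~\ref{prop-decreas} applies to the actual $y_{2k}$'s and $\|Ty_{2k}\|\to 0$ contradicts $\|Ty_{2k}\|>\varepsilon$. In short: your reduction ``subtract the infimum'' changes the sequence on which the lower bound $\|Ty_k\|>\varepsilon$ holds, so the contradiction evaporates; you need the maximality device to prove the intersection is trivial without subtracting anything.
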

\begin{proof}
By Proposition~\ref{prop-dec} the set
$$
D_{x,T,\varepsilon}:=\{z\in\mathcal{F}_{x}:\,z\neq
0,\,\|Tz\|\leq\varepsilon\}
$$
is not empty. We note that $D_{x,T,\varepsilon}$ is a partially
ordered set with respect to the relation $\sqsubseteq$. Let
$(u_{\lambda})_{\lambda\in\Lambda} \subseteq D_{x,T,\varepsilon}$ be
a chain, where $\Lambda$ is a some linearly ordered index set.
Clearly $u_{\mu}\sqsubseteq u_\lambda$ for all
$\mu,\lambda\in\Lambda$, $\mu\sqsubseteq\lambda$.
By the $C$-completeness of vector lattice $E$ there
exists $u=\olim\limits_{\lambda\in\Lambda}(u_{\lambda})$, where
$u\in\mathcal{F}_{x}$. Due to $\|Tu_{\lambda}\|\leq \eps$ and the
laterally-to-norm continuity of $T$ we have that then
$Tu=\lim\limits_{\lambda\in\Lambda}(Tu_{\lambda})$ and therefore
$u\in D_{x,T,\eps}$. By Zorn's Lemma, there is a maximal
element\footnote{\, The element $z$ is maximal in $D_{x,T,\eps}$, if
$\nexists \,u\in D_{x,T,\eps}$ such that $z\sqsubseteq u$ and $z\neq
u$.} $z\in D_{x,T,\eps}$ with $\|Tz\|\leq\eps$. Put $y=x-z$. If
$\|Ty\|\leq\varepsilon$ then we got the required decomposition of
$x$. Otherwise we apply Proposition~\ref{prop-dec} to $y$ and get
$y=y_1\sqcup y_2$, where $y_1$ is a maximal element in
$D_{y,T,\eps}$ with $\|Ty_1\|\leq\eps$. In case of necessity, i.e.
if $\|Ty_2\|>\varepsilon$, by further continuing in the same way
with the corresponding fragments
$y_2, y_4, \ldots, y_{2k}, \ldots$ of $y$  we construct a sequence of decompositions
$y_{2k}=y_{2k+1}\sqcup y_{2k+2}$, where $y_{2k+1}$ is a maximal element
in $D_{y_{2k},T,\eps}$ and satisfying the conditions
$\|Ty_{2k+1}\|\leq\varepsilon$ and $\|Ty_{2k+2}\| > \varepsilon, \;
k\in \N$. We claim that there exists $l\in\N$ such that
$y_{2l}=y_{2l+1}\sqcup y_{2l+2}$ and both $\|Ty_{2l+1}\|, \,
\|Ty_{2l+2}\|\leq\varepsilon$.
Assume the contrary. Then the sequence
$(y_{2k})_{k\in \N}$
of fragments of $y$ is
such that $\|Ty_{2k}\| > \varepsilon$ for all $k\in\N$.
Nevertheless we show
$\bigcap\limits_{k\in\N}\mathcal{F}_{y_{2k}}=\{0\}$.
Indeed, assume that there exists a nonzero element
$v\in\bigcap\limits_{k\in\N}\mathcal{F}_{y_{2k}}$. Then for the
sequence
$$
y'_{2}=y_{2}-v,\; y'_{4}=y_{4}-v, \;\ldots,\; y'_{2k}=y_{2k}-v,\;\ldots
$$
we have $y'_{2n}\sqsubseteq y'_{2m}$ for $m,n\in\N$ with $n\geq m$ and
$\bigcap\limits_{n\in\N}\mathcal{F}_{y'_{2n}}=\{0\}$.
According to Proposition~\ref{prop-decreas} there exists $n_{0}\in\N$ such that
$\|Ty'_{2n_0}\| < \varepsilon$.
Thus $y_{2n_0}=y'_{2n_0}\sqcup v$,
$y'_{2n_0}\in D_{y_{2n_0},T,\eps}$ and
$y_{{2n_0}+1}$ is a maximal element of $D_{y_{2n_0},T,\eps}$. We have
\[y_{2n_0}=y_{{2n_0}+1}\sqcup y_{{2n_0}+2}= y_{2n_0}'\sqcup v. \]

Consider now the two cases:
\[
\mathcal{F}_{y_{_{{2n_0}+1}}} \cap\mathcal{F}_{v}=\{0\}\quad \text{and}\quad \mathcal{F}_{y_{_{{2n_0}+1}}} \cap\mathcal{F}_{v}\neq \{0\}.
\]
In the first case, since no non-zero fragment of $v$ is a fragment of $y_{{2n_0}+1}$,
we get $y_{{2n_0}+1}\sqsubseteq y'_{2n_0}$.
Observe that - due to the maximality of
 $y_{{2n_0}+1}$ in $D_{y_{2n_0},T,\eps}$ -
the relation $y_{{2n_0}+1}\sqsubseteq y'_{2n_0}$, $y_{{2n_0}+1}\neq y'_{2n_0}$ is impossible.
Hence  $y'_{2n_0}= y_{2n_0+1}$, i.e. $y_{2n_0+2}=v$ and, therefore $v$ is
not a fragment of $y_{2n_0+4}$.
In  the second one there is a nonzero
fragment $w\in\mathcal{F}_{y_{_{{2n_0}+1}}} \cap\mathcal{F}_{v}$.
Thus $v=(v-w)\sqcup w$ and $w\sqsubseteq y_{{2n_0}+1}$,
i.e. the  non-zero fragment $w$ of $v$ belongs to $\mathcal{F}_{y_{2n_0+1}}$.
Therefore the element $v$ can not be a fragment of $y_{2n_0+2}$.

Hence $\bigcap\limits_{k\in\N}\mathcal{F}_{y_{2k}}=\{0\}$ and so, again by
applying  Proposition~\ref{prop-decreas}, we get
$\lim\limits_{k\to\infty} \|T y_{2k}\|=0$. This is a contradiction
and therefore the desirable $l\in\N$ exists. Consider the following elements
$$
x_{1}=y_1, \;x_2= y_3,\; \ldots, \; x_l=y_{2l-1},\; x_{l+1}= y_{2l+1},
\;x_{l+2}= y_{2l+2}, \:x_{l+3}= z.
$$
Then  $x=\bigsqcup\limits_{i=1}^{n}x_{i}$ with $n=l+3$ is the
desirable  decomposition of $x$.
\end{proof}
\begin{prop} \label{le-fin}
Let $E$ be an atomless  $C$-complete vector lattice and $V$ a finite
dimensional Banach space. Then every orthogonally additive
laterally-to-norm continuous operator $G: E \to V$ is
narrow.
\end{prop}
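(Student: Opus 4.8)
The plan is to reduce the statement to the two preparatory results already at hand: Proposition~\ref{prop-dec-1}, which chops $x$ into finitely many fragments on each of which $G$ is small, and the finite-dimensional rounding estimate, Proposition~\ref{prop-rounding}. Fix $x\in E$ and $\varepsilon>0$. We may assume $V\neq\{0\}$, since otherwise $G\equiv 0$ is trivially narrow, and put $d:=\dim V\geq 1$. First I would apply Proposition~\ref{prop-dec-1} with the parameter $\varepsilon/(2d)$ in place of $\varepsilon$ to obtain some $n\in\N$ and a disjoint decomposition $x=\bigsqcup\limits_{i=1}^{n}x_i$ into nonzero fragments of $x$ with $\nl Gx_i\nr\leq\varepsilon/(2d)$ for every $i\in\{1,\dots,n\}$.

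Next I would apply Proposition~\ref{prop-rounding} to the vectors $v_i:=Gx_i\in V$ together with the coefficients $\lambda_i:=\tfrac12$, which satisfy $0\leq\lambda_i\leq 1$. This produces numbers $\theta_i\in\{0,1\}$, $i\in\{1,\dots,n\}$, with
\[
\Big\|\sum_{i=1}^{n}\big(\tfrac12-\theta_i\big)Gx_i\Big\|\leq\frac{d}{2}\max_{1\leq i\leq n}\nl Gx_i\nr\leq\frac{d}{2}\cdot\frac{\varepsilon}{2d}=\frac{\varepsilon}{4}.
\]
Set $A:=\{i:\theta_i=1\}$, $B:=\{i:\theta_i=0\}$, and define $x_1:=\bigsqcup\limits_{i\in A}x_i$, $x_2:=\bigsqcup\limits_{i\in B}x_i$, with the convention that an empty join equals $0$. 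Since the $x_i$ are pairwise disjoint and $A\cup B=\{1,\dots,n\}$, the elements $x_1,x_2$ are mutually complemented fragments of $x$. Using that orthogonal additivity of $G$ extends by induction from pairs to finite disjoint families, $Gx_1=\sum\limits_{i\in A}Gx_i=\sum\limits_{i=1}^{n}\theta_iGx_i$ and $Gx_2=\sum\limits_{i=1}^{n}(1-\theta_i)Gx_i$, whence
\[
Gx_1-Gx_2=\sum_{i=1}^{n}(2\theta_i-1)Gx_i=-2\sum_{i=1}^{n}\big(\tfrac12-\theta_i\big)Gx_i.
\]
Therefore $\nl Gx_1-Gx_2\nr\leq 2\cdot\frac{\varepsilon}{4}=\frac{\varepsilon}{2}<\varepsilon$, which is precisely what narrowness of $G$ demands at $x$.

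Since both main ingredients are already proved, this argument is essentially bookkeeping; the only points deserving a line of care are that a finite disjoint join of fragments of $x$ is again a fragment of $x$ (so that $x=x_1\sqcup x_2$ holds literally and $x_1,x_2\in\mathcal{F}_x$), that orthogonal additivity passes from two disjoint summands to arbitrarily many, and the trivial degenerate case $\dim V=0$. I do not anticipate a genuine obstacle here; if anything is delicate it is the invocation of Proposition~\ref{prop-dec-1}, which is where the atomlessness and $C$-completeness of $E$ are actually consumed.
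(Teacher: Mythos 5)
Your argument is correct and is essentially the same as the paper's: both apply Proposition~\ref{prop-dec-1} to make each $\|Gx_i\|$ small and then Proposition~\ref{prop-rounding} with $\lambda_i=\tfrac12$ to round to a two-block decomposition; the only differences are the harmless choice of $\varepsilon/(2d)$ instead of $\varepsilon/d$ and your reuse of the symbols $x_1,x_2$ for the resulting fragments, which clashes with the $x_i$ of the decomposition (the paper writes $y_0,y_1$).
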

\begin{proof}
Fix any $x\in E$ and $\varepsilon > 0$.
According to Proposition~\ref{prop-dec-1} there is a disjoint
decomposition
$x=\bigsqcup\limits_{i=1}^{n}x_i$ such that
$\|Gx_{i}\| < \frac{\eps}{{\rm dim} \, V}$  for any
$i\in \{1,\ldots,n\}$.
 Then by using Proposition~\ref{prop-rounding} for $\la_i=\frac{1}{2}$
there exist numbers
 $\theta_i\in\{0,1\}$ for $i\in \{1,\ldots, n\}$ such that
\begin{gather} \label{eq:855jfg}
2\Bigl\|\sum_{i=1}^{n} \Bigl(\frac{1}{2}-\theta_{i} \Bigr) \,
Gx_i\Bigr\| \leq {\rm dim}\,V \max\limits_{i\in \{1,\ldots,n\}}
\|Gx_i\| < \eps.
\end{gather}
Observe that for $I_0 = \big\{i\in \{1,\ldots,n\big\}\colon \theta_i
= 0\big\}$ and $I_1 = \big\{i\in \{1,\ldots, n\}\colon \theta_i =
1\big\}$ the vectors $y_k = \bigsqcup\limits_{i \in I_k} x_i$ for $k\in
\{0,1\}$ are mutually complemented fragments of $x$ and by
\eqref{eq:855jfg},
$$
\|Gy_1 - Gy_2\| = \Bigl\|\sum_{i\in I_0\sqcup I_1} (1-2\theta_i) \,
Gx_i\Bigr\| < \eps,
$$
hence the operator $G$ is narrow.
\end{proof}

\begin{definition}
Let $E$ be a  vector lattice and $F$ a vector space. An orthogonally additive
operator $T:E\to F$ is called of {\it finite rank} if
the set $T(E)$ generates a finite-dimensional subspace in $F$.
\end{definition}

Now we are in the position to prove the second main result (Theorem \ref{thm-2}).
\\
Before, however, we notice that a Banach space $X$ can be considered
as a closed subspace of the Banach space 
\[
W:=l_{\infty}(B_{X^*})=\{q\colon B_{X^*}\to \R,\, \sup|q(f)|<\infty,\; f\in B_{X^*} \}
\]
(equipped with the supremum-norm) of all real-valued bounded functions on the closed unit ball $B_{_{X^*}}$
of the dual space $X^*$ according to 
$$
X \hookrightarrow X^{**} \hookrightarrow W,
$$
where the notation $\hookrightarrow$ means the isometric embedding
\[X\ni x\mapsto F_x\in X^{**} \quad \mbox{given by}\quad  F_x(f):=f(x),\; f\in
B_{_{X^*}}.
\]
Observe that, if $H$ is a relatively compact subset of the Banach space $W$
and $\eps > 0$, then there exists a linear finite rank operator $R\in
\cL(W)$ such\footnote{\, By $\cL(W)$ there is denoted the space of all linear bounded operators on $W$.} 
that $\|w-Rw\|\leq\eps$ for every $w\in H$
\cite[Lemma~10.25]{PRan}.

\begin{proof}[Proof of Theorem~\ref{thm-2}]
Fix an arbitrary $x\in E$ and $\eps > 0$. Due to the $C$-compactness of $T$ 
the set $K = T(\mathcal{F}_{x})$ is relatively compact in $X$ and therefore in $W$.
It follows that there exists a finite rank operator $S\in\cL(W)$ such that $\|y-
Sy\|\leq\frac{\varepsilon}{4}$ for every $y\in K$. 
Then $G=S\circ T$
is an orthogonally additive laterally-to-norm continuous
finite rank operator. By Lemma~\ref{le-fin} $G$ is narrow and 
consequently, there exist mutually complemented fragments
$x_{1},x_{2}$ of $x$ such that $\|Gx_1 - Gx_2\| < \frac{\eps}{2}$.
Therefore,
\begin{align*}
& \|Tx_1 - Tx_2 \| =\\
& \|Tx_1 - Tx_2 + S(Tx_1) - S(Tx_2) - S(Tx_1) + S(Tx_2) \|=   \\
& \|Tx_1 - Tx_2 + Gx_1 - Gx_2 - Gx_1+Gx_2\| \leq     \\
& \|Gx_1 - Gx_2\| + \|Tx_1 - Gx_1\|+\|Tx_2 - Gx_2\| <
\frac{\varepsilon}{2}+\frac{\varepsilon}{2}=\varepsilon
\end{align*}
since $\|Tx_i - Gx_i\|<\frac{\varepsilon}{4}$ for $i\in \{1,2\}$.
This completes the proof.
\end{proof}
We remark that Theorem~\ref{thm-2} generalizes the result of the article
\cite[Theorem~3.2]{PP}.

\medskip
{\bf Acknowledgments.} Marat Pliev  was supported  by the Russian Foundation
for Basic Research (grant  number 17-51-12064). Martin Weber was supported by the
Deutsche Forschungsgemeinschaft (grant number CH 1285/5-1, Order preserving
operators in problems of optimal control and in the theory of partial differential equations).


\begin{thebibliography}{8}

\bibitem{AP}~N.~Abasov, M.~Pliev, \textit{On extensions of some nonlinear maps
in vector lattices},  J.  Math. Anal. Appl.
 {\bf 455} (2017), 516--527.

\bibitem{AP-1}~N.~Abasov, M.~Pliev, \textit{Disjointness preserving
orthogonally
additive operators in vector
lattices}, Banach of Math. Anal.
 {\bf 12} (2018), 3, 730--750.

\bibitem{AP-2}
N.\,Abasov,   M.\, Pliev,  \textit{Dominated orthogonally additive
operators  in lattice-normed spaces}, Advances in Operator Theory,
\textbf{4} (2019), 1, 251--264.

\bibitem{AB}
C.~D.~Aliprantis, O.~Burkinshaw, \textit{Positive Operators}, Springer,
Dordrecht. (2006).

\bibitem{AZ}
J.~A.~Appell, P.~P.~Zabrejko, \textit{Nonlinear superposition
operators}, Cambridge University Press, 2008.

\bibitem{F-1}
W.\,A.~Feldman, \textit{Lattice preserving maps on lattices of continuous
functions},
J.  Math. Anal. Appl., {\bf 404} (2013), 310--316.

\bibitem{F-2}
W.\,A.~Feldman, \textit{A characterization of non-linear maps
satisfying orthogonality properties}, Positivity  {\bf 21} (2017),
1, 85--97.

\bibitem{F-3}
W.\,A.~Feldman, \textit{A factorization for orthogonally
additive operators on Banach lattices}, J.  Math. Anal. and Appl.,
{\bf 472}, (2019), 1,  238--245.

\bibitem{FFKP}
O.~Fotiy, A.~Gumenchuk, I.~Krasikova, M.~Popov, \textit{On sums of
narrow and compact operators}, Positivity,
doi.org/10.1007/s11117-019-00666-4.

\bibitem{KF}
A.~N.~Kolmogorov, S.~V.~Fomin, \textit{Introductory Real Analysis},
Dover Publication. (1975).

\bibitem{MMP}
O.~Maslyuchenko, V.~Mykhaylyuk, M.~Popov, \textit{A lattice approach to narrow
operators},
Positivity, \textbf{13} (2009),   459--495.

\bibitem{Maz-1}
J.~M.~Maz\'{o}n, S.~Segura de Le\'{o}n, \textit{Order bounded ortogonally
additive operators}, Rev. Roumane Math. Pures Appl.,
{\bf 35} (1990), 4, 329--353.

\bibitem{Maz-2}
J.~M.~Maz\'{o}n, S.~Segura de Le\'{o}n, \textit{Uryson operators},
Rev. Roumane Math. Pures Appl. {\bf 35} (1990), 5, 431--449.


\bibitem{MP}
V.~Mykhaylyuk,  M.~Popov, \textit{On sums of narrow operators on  K\"{o}the
function space},
J. Math. Anal. Appl. \textbf{404} (2013), 554--561.

\bibitem{M}
V.~Mykhaylyuk, \textit{On the sum of narrow and a compact operators}, J. Funct.
Anal.  \textbf{266} (2014), 5912--5920.

\bibitem{OPR}
V.~Orlov, M.~Pliev, D.~Rode, \textit{Domination problem for $AM$-compact
abstract Uryson  operators},
Archiv der Mathematik, \textbf{107}, 5, (2016),  543--552.

\bibitem{PPlich}
Plichko\,A.,\,M.Popov\,M.
Symmetric function spaces on atomless probability spaces,
Dissertationes Math. (Rozprawy Mat.), \textbf{306} (1990),  1--85.


\bibitem{PP} M.~Pliev, M.~Popov,
\textit{Narrow orthogonally additive operators}, Positivity, \textbf{18}
(2014),
4, 641--667.

\hide{
\bibitem{PP-1}
M.~Pliev, M.~Popov,
\textit{On extension of abstract Urysohn operators}, Siberian Math. J.,
\textbf{57} (2016), 3, 552--557.
}

\bibitem{PR}  M.~Pliev, K.~Ramdane \textit{Order unbounded  orthogonally
additive operators in vector lattices},
Mediterranean J. Math.  \textbf{15} (2018), 2, 20~p.


\bibitem{PF}
M.\, Pliev, X.\,Fang,  \textit{Narrow orthogonally additive
operators in lattice-normed spaces}, Siberian Math. J., \textbf{58}
(2017), 1, 134--141.

\hide{
\bibitem{Pl-7}
M.~Pliev \textit{Domination problem for narrow orthogonally additive
operators}, Positivity, \textbf{21} (2017), 1, 23--33.
}

\bibitem{PRan}
M.~Popov, B.~Randrianantoanina, \textit{Narrow Operators on Function Spaces and
Vector Lattices},
De Gruyter Studies in Mathematics 45, De Gruyter (2013).

\bibitem{PW}
M.\,A.~Pliev, M.\,R.~Weber, \textit{Disjointness and order
projections in the vector lattices of abstract Uryson operators},
Positivity, {\bf 20} (2016), 3, 695--707.

\bibitem{PPW}
M.\,A.~Pliev, F.~Polat, M.\,R.~Weber, \textit{Narrow and $C$-compact
orthogonally additive operators in lattice-normed spaces},  Results
in Math., {\bf 74} (2019), 4, 19 pp.

\bibitem{TrVi1}
Tradacete, P., and Villanueva, I.
\newblock \textit{Continuity and representation of valuations on star bodies},
Adv. Math. {\bf 329} (2018), 361--391.

\bibitem{TrVi2}
Tradacete, P., and Villanueva, I. \textit{Valuations on Banach
lattices},  Int. Math. Res. Not., (2019),  to appear.

\bibitem{Zaa}
A.\,C.~Zaanen,
\textit{Introduction to Operator Theory in Riesz Spaces},
Springer, Berlin Heidelberg New York. (1997).
\end{thebibliography}
\end{document}